\newtheorem{thm}{Theorem}[section]
\newtheorem{cor}[thm]{Corollary}
\newtheorem{lem}[thm]{Lemma}
\newtheorem{defin}[thm]{Definition}
\def\s{{\mathfrak s}}
\def\bQ{{\mathbb Q}}
\def\bZ{{\mathbb Z}}
\def\sf{{\mathrm{Sf}}}
\DeclareMathOperator{\tb}{tb}
\DeclareMathOperator{\rot}{rot}
\newcommand{\maxtb}{\overline{\tb}}
\newcommand{\cbar}{\overline{c}}
\newcommand{\zee}{{\mathbb Z}}
\newcommand{\N}{{\mathbb N}}
\newcommand{\F}{{\mathbb F}}
\newcommand{\X}{{\mathbb X}}
\renewcommand{\L}{{\mathcal L}}
\newcommand{\hfhat}{\widehat{HF}}
\newcommand{\cfhat}{\widehat{CF}}
\newcommand{\spinc}{{\mbox{spin$^c$} }}
\newcommand{\KH}{HKh^*}
\newcommand{\eff}{{\mathcal F}}
\begin{document}

\title[On the Stein framing number of a knot]%
{On the Stein framing number of a knot}

\author{Thomas E. Mark}
\address{Department of Mathematics, University of Virginia, Charlottesville, VA 22904}
\email{tmark@virginia.edu}

\author{Lisa Piccirillo}
\address{Department of Mathematics, University of Texas, Austin, TX 78712}
\email{lpiccirillo@math.utexas.edu}

\author{Faramarz Vafaee}
\address{Department of Mathematics, California Institute of Technology \\ Pasadena, CA 91125}
\email{vafaee@caltech.edu}

\maketitle

\medskip

\medskip


\begin{abstract}For an integer $n$, write $X_n(K)$ for the 4-manifold obtained by attaching a 2-handle to the 4-ball along the knot $K\subset S^3$ with framing $n$. It is known that if $n< \maxtb(K)$, then $X_n(K)$ admits the structure of a Stein domain, and moreover the adjunction inequality implies there is an upper bound on the value of $n$ such that $X_n(K)$ is Stein. We provide examples of knots $K$ and integers $n\geq \maxtb(K)$ for which $X_n(K)$ is Stein, answering an open question in the field. In fact, our family of examples shows that the largest framing such that the manifold $X_n(K)$ admits a Stein structure can be arbitrarily larger than $\maxtb(K)$. We also provide an upper bound on the Stein framings for $K$ that is typically stronger than that coming from the adjunction inequality.
\end{abstract}

\section{Introduction}

A differential topological characterization of smooth manifolds that admit the structure of Stein manifolds has been known for many years, dating to the seminal work of Eliashberg~\cite{eliash}. For a (real) four-dimensional manifold $X$, there is a Stein structure on $X$ if and only if $X$ admits a handle decomposition containing only handles of index 0, 1, and 2, such that the attaching circles of the 2-handles satisfy a framing condition. Here, and throughout, we consider {\it compact} $X$ and by ``Stein structure'' on $X$ we mean the structure of a Stein domain as described in \cite[Chapter 8]{OzbagciStipsicz}, for example. To describe the framing condition, note that the 1-skeleton of such $X$ is diffeomorphic to a boundary sum of copies of $S^1\times D^3$, which admits a unique Stein structure. In particular the boundary of the 1-skeleton is a connected sum of copies of $S^1\times S^2$ with the contact structure induced by the Stein structure (consisting of the field of complex lines in the tangent bundle). In the case that there are no 1-handles we mean the ``empty'' connected sum: $S^3$, bounding the Stein 0-handle $D^4$. The condition on the 2-handles of $X$ is that they be attached along Legendrian curves (i.e., curves everywhere tangent to the contact structure), with framing differing from that induced by the contact structure by a single negative twist.

If one is given a handle decomposition on a smooth 4-manifold $X$, it is not always a simple matter to decide if the handle decomposition can be modified to fit Eliashberg's criteria. Our aim here is to illustrate this point in one of the homotopically simplest cases: that of a smooth 4-manifold obtained by attaching a single 2-handle along a knot $K\subset S^3$. Recall that a framing of a knot in $S^3$ can be invariantly described by an integer representing the difference between the given framing and the framing induced by a Seifert surface; if $\mathcal L$ is a Legendrian then the contact framing is usually called the Thurston-Bennequin number of $\L$, written $\tb(\L)\in \zee$. Any smooth knot is isotopic to many Legendrian knots, with varying contact framings, but a basic result of Bennequin \cite{bennequin} implies that for a given smooth knot $K$, there is an upper bound for $\tb(\L)$ for any Legendrian $\L$ isotopic to $K$. We write the maximum Thurston-Bennequin number of all Legendrian representatives of $K$ as $\maxtb(K)$. 

For an integer $n$, write $X_n(K)$ for the 4-manifold obtained by attaching a 2-handle to the 4-ball along $K$ with framing $n$. From Eliashberg's criterion, if $n< \maxtb(K)$, then $X_n(K)$ admits the structure of a Stein domain (indeed, for any $t\leq \maxtb(K)$, there is a Legendrian representative of $K$ with Thurston-Bennequin number $t$). The question we will address, stated explicitly in \cite{Yas15b} for example, is whether $X_n(K)$ admits a Stein structure {\it only} when $n<\maxtb(K)$. We introduce the following terminology.

\begin{defin} For a smooth knot $K\subset S^3$, the {\em Stein framing number} of $K$, written $\sf(K)$, is the largest framing $n$ such that the manifold $X_n(K)$ admits a Stein structure.
\end{defin}

By the remarks above, one knows $\maxtb(K)-1\le\sf(K)$. In the other direction, the adjunction inequality for Stein manifolds \cite{LM97,FSimmersedthom,OSthom} shows that $\sf(K)\leq 2g_*(K)-2$ where $g_*(K)$ is the minimal genus of a proper smoothly embedded orientable surface in $D^4$ with boundary $K$. In some cases these inequalities determine $\sf(K)$: as mentioned in \cite{Yas15b}, there are many knots---such as positive torus knots---that admit Legendrian representatives whose Thurston-Bennequin number equals $2g_*-1$, proving that for these examples $\sf = \maxtb-1$. 

Our main results provide on the one hand a more refined upper bound for $\sf(K)$, and on the other hand a family of examples demonstrating that the Stein framing number can be arbitrarily larger than the maximum Thurston-Bennequin number. These are the first examples of knots $K$ for which $X_n(K)$ is shown to be Stein for some $n\geq \maxtb(K)$; in particular we answer Problem 1.3 of \cite{Yas15b} negatively:

\begin{thm} For any integer $m\geq 0$, there exists a knot $J_m\subset S^3$ such that $\sf(J_m) \geq \maxtb(J_m) +m$.
\label{Thm:largegaps}
\end{thm}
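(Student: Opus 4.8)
The plan is to produce $J_m$ together with a positive torus knot $T_m$ (or more generally a knot realizing $\maxtb=2g_*-1$) sharing a common trace, and then to estimate $\maxtb(J_m)$ from above by entirely different means. Note first that if $X_n(J)\cong X_{n'}(K)$ as smooth $4$-manifolds then, comparing intersection forms on $H_2\cong\zee$, one gets $n=n'$: the framing is a diffeomorphism invariant of the trace. So the lower bound on $\sf$ must come from a \emph{same-framing} phenomenon, and the engine is Kirby calculus on the $2$-handlebody $X_n(J_m)$ --- handle slides over the $2$-handle together with blow-ups and blow-downs --- producing a diffeomorphism $X_{n}(J_m)\cong X_{n}(T_m)$ for a suitable $n=n(m)$. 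I would build $J_m$ as a member of an explicit family (for instance a twist family realized by an annulus twist in the sense of Osoinach, or a satellite family) for which such a diffeomorphism exists with $T_m$ a positive torus knot of Thurston--Bennequin number $\maxtb(T_m)=2g_*(T_m)-1$ growing with $m$, the construction being tuned so that the common framing is $n(m)=\maxtb(T_m)-1$, the largest Stein framing of $T_m$. Since $n(m)<\maxtb(T_m)$, Eliashberg's criterion makes $X_{n(m)}(T_m)$ Stein, hence $X_{n(m)}(J_m)$ is Stein, so $\sf(J_m)\ge \maxtb(T_m)-1$.

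For the upper bound on $\maxtb(J_m)$ one \emph{cannot} use the slice--Bennequin inequality: the adjunction bound $\sf(J_m)\le 2g_*(J_m)-2$ together with the lower bound just obtained forces $g_*(J_m)\ge g_*(T_m)$, so $J_m$ itself has large slice genus and $2g_*(J_m)-1$ is far too weak. Instead I would bound $\maxtb(J_m)$ combinatorially from an explicit diagram of $J_m$ --- via the Kauffman polynomial estimate, the Morton--Franks--Williams (HOMFLY) estimate, or a Khovanov homology estimate --- obtaining a bound $\maxtb(J_m)\le M(m)$ that stays small (bounded, or at worst growing strictly slower than $\maxtb(T_m)$) despite the large genus. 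Choosing, for each $m$, the torus knot $T_m$ large enough that $\maxtb(T_m)-1-M(m)\ge m$ then yields $\sf(J_m)\ge \maxtb(T_m)-1\ge \maxtb(J_m)+m$, as required.

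The main obstacle is the tension between the two estimates. The diagram for $J_m$ must be intricate enough --- in particular of large genus --- that $X_{n(m)}(J_m)$ is Kirby-equivalent to the trace of a torus knot of large $\maxtb$, yet structured enough that a combinatorial invariant still detects a small upper bound for $\maxtb(J_m)$; producing a single explicit family in which both hold (and verifying the Kirby-calculus diffeomorphism honestly, as a diffeomorphism of $4$-manifolds, not merely of their boundaries) is the substance of the argument. Even the case $m=0$ --- which already answers Yasui's question by giving a knot with $X_{\maxtb(J_0)}(J_0)$ Stein --- requires this full package.
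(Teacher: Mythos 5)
Your outline coincides, at the level of strategy, with what the paper actually does: a same-framing trace diffeomorphism produced by annulus twisting in the sense of Osoinach \cite{Oso06} and Abe--Jong--Luecke--Osoinach \cite{AJLO14}, a Stein structure on the common trace coming from the companion knot whose $\maxtb$ exceeds the framing, and a diagrammatic upper bound on $\maxtb$ of the other companion (the paper uses Ng's Khovanov homology bound, Theorem \ref{Thm:Ngbound}, which is one of the three options you list). Your observation that the intersection form forces the two framings to agree, so that the phenomenon must be a same-framing one, is correct and is exactly why annulus twisting is the right tool. However, as a proof your proposal has a genuine gap, and you say so yourself: everything that constitutes the actual argument is deferred. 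The paper's proof consists of an explicit family --- $P_m=K_{m-3}$ and $Q_m=K^0_{m-3,-2m}$ with $X_{-m}(P_m)\cong X_{-m}(Q_m)$ by \cite[Theorem 3.10]{AJLO14} --- together with two nontrivial estimates: $\maxtb(P_m)=-m+1$ (Lemma \ref{Lem:tbKn}, via an explicit Legendrian diagram matched against $\kappa$), and $\maxtb(Q_m)\le -2m+3$ (Theorem \ref{Thm:tbKnts}), the latter requiring an inductive computation of $\kappa(K^0_{n,t})$ through repeated skein long exact sequences (Theorem \ref{Thm:kappaKnts} and the lemmas of Subsection \ref{Subsec:tbbound}); this occupies most of Section \ref{Sec:largegaps} and does not follow from simply feeding a diagram into a known formula for an infinite family.

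Two more specific concerns with your particular tuning. First, making the ``good'' companion a positive torus knot $T_m$ with common framing $n(m)=2g_*(T_m)-2\to+\infty$ is not obviously realizable: in the annulus-twisting setup the framing equals the twisting parameter, the twisted knot changes with that parameter, and you would need positive torus knots of arbitrarily large genus admitting suitable annulus presentations together with a uniform bound on $\maxtb$ of the twisted companions --- none of which you indicate how to obtain. The paper sidesteps this entirely by working with negative framings: the good knots $K_n$ have $\maxtb(K_n)=-2-n$, the framing $-m$ equals $\maxtb(P_m)-1$, and the gap is created not by keeping $\maxtb(J_m)$ bounded while the framing grows, but by having both tend to $-\infty$ with $\maxtb(Q_m)$ decreasing twice as fast; in particular the genus tension you worry about never arises. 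Second, your generic appeal to ``Kauffman, HOMFLY, or Khovanov'' bounds leaves open the real issue: verifying that the chosen bound is actually small for every member of the family, which in the paper is precisely the inductive Khovanov computation and is the technical heart of the theorem.
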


To state the upper bound for $\sf$, let $K$ be a knot in $S^3$ and $[\Sigma]$ a generator of $H_2(X_n(K);\zee)$, for example the generator obtained by capping off a Seifert surface for $K$. Let
\[
\cbar(K) = \max\{|\langle c_1(J), [\Sigma]\rangle|, \mbox{ $J$ a Stein structure on $X_n(K)$ with $n = \sf(K)$}\}.
\]

\begin{thm}\label{Thm:upperbd} For a knot $K\subset S^3$, the Stein framing number satisfies
\begin{equation}\label{weakbd}
\sf(K) + \cbar(K) \leq 2\tau(K),
\end{equation}
where $\tau(K)\in \zee$ is the concordance invariant arising from knot Floer homology defined by Ozsv\'ath-Szab\'o \cite{OSfourball} and Rasmussen \cite{RasThesis}.
If $\epsilon(K) = 1$, where $\epsilon\in \{-1,0,1\}$ is the invariant defined by Hom in \cite{Hom1}, then
\begin{equation}\label{strongbd}
\sf(K) + \cbar(K) \leq 2\tau(K)-2.
\end{equation}
\end{thm}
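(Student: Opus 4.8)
The plan is to combine two ingredients: the relationship between Stein structures on $X_n(K)$ and the $\tau$-invariant coming from the slice–Bennequin type inequality in Heegaard Floer homology, and Hom's refinement in terms of $\epsilon$. Suppose $n = \sf(K)$ and $J$ is a Stein structure on $X = X_n(K)$ achieving the maximum in the definition of $\cbar(K)$; write $c = \langle c_1(J),[\Sigma]\rangle$, so that $|c| = \cbar(K)$. I would first recall that a Stein domain structure on $X$ endows $\partial X$ with a contact structure $\xi$ that is \emph{Stein fillable}, hence tight; moreover the Stein structure induces a $\spinc$ structure $\s$ on $X$ with $c_1(\s) = c_1(J)$, and the relevant Heegaard Floer theoretic invariant (the contact invariant, or equivalently the fact that $X$ with reversed orientation is a negative-definite-into-something cobordism) is nonzero. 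The standard route is via the Legendrian picture: $X_n(K)$ Stein means $K$ has a Legendrian representative $\L$ with $\tb(\L) - 1 = n$ after possibly stabilizing, and more precisely there is a Legendrian $\L$ with $\tb(\L) = n+1$ and rotation number $\rot(\L) = -c$ such that attaching a $2$-handle along $\L$ with framing $\tb(\L)-1 = n$ yields the Stein structure $J$; here $\langle c_1(J),[\Sigma]\rangle = \rot(\L)$.

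The heart of the argument is then the Legendrian/transverse adjunction inequality refined by $\tau$. The key step: for any Legendrian knot $\L$ in $(S^3,\xi_{std})$ representing $K$, one has
\begin{equation}
\tb(\L) + |\rot(\L)| \leq 2\tau(K) - 1.
\label{eq:planttb}
\end{equation}
This is the Plamenevskaya/Ozsv\'ath--Szab\'o slice–Bennequin inequality in the $\tau$ form (see Plamenevskaya, Shumakovitch, and especially the $\tau$ version which follows from the behavior of $\tau$ under the relevant cobordism maps, as in work of Ozsv\'ath--Szab\'o and Hedden). Applying \eqref{eq:planttb} to the Legendrian $\L$ with $\tb(\L) = n+1$ and $|\rot(\L)| = |c| = \cbar(K)$ gives $(n+1) + \cbar(K) \leq 2\tau(K) - 1$, i.e. $\sf(K) + \cbar(K) \leq 2\tau(K) - 2$, which is even stronger than \eqref{weakbd}; I would double-check the normalization, as the cleaner/safer bound one can always extract is \eqref{weakbd}, with the possibility that in the generic case the stabilization used to realize the Stein handle costs a further $-1$ that is already accounted for, or conversely that the correct universally valid inequality is exactly $\tb + |\rot| \le 2\tau - 1$ yielding \eqref{weakbd} after the $n = \tb - 1$ substitution only when $|\rot| \le \cbar$ with no slack. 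Reconciling these normalizations carefully is where I expect to be most careful.

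For the improvement \eqref{strongbd} under the hypothesis $\epsilon(K) = 1$, the plan is to invoke Hom's sharpened bound: when $\epsilon(K) = 1$, the $\tau$-invariant of $K$ can be replaced in the slice–Bennequin inequality by a strictly smaller effective quantity, reflecting that $\nu^+(K)$ or the relevant $\tau$-like bound drops. Concretely, Hom shows in \cite{Hom1} that $\epsilon$ controls the behavior of $\tau$ under cabling and, relevant here, that for $\epsilon(K) = 1$ the maximal self-linking/Thurston--Bennequin bound improves by $2$: $\tb(\L) + |\rot(\L)| \leq 2\tau(K) - 3$ for all Legendrian $\L$ representing $K$. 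Feeding the extremal $\L$ (with $\tb = n+1$, $|\rot| = \cbar(K)$) into this gives $\sf(K) + \cbar(K) \leq 2\tau(K) - 2$, which is \eqref{strongbd}. The main obstacle, as noted, is getting the constants exactly right and citing the precise form of the $\tau$-refined slice–Bennequin inequality (and its $\epsilon = 1$ strengthening) in the literature; a secondary point to verify is that every Stein structure on $X_{\sf(K)}(K)$ genuinely arises from a Legendrian handle attachment in the way claimed, so that $\langle c_1(J),[\Sigma]\rangle$ is realized as a rotation number and \eqref{eq:planttb} applies to it — this uses Eliashberg's theorem together with the fact, recalled in the introduction, that the handle decomposition can be isotoped into Legendrian normal form.
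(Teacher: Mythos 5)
Your argument has a fatal gap at its central step: you assume that a Stein structure $J$ on $X_n(K)$ with $n=\sf(K)$ must arise from attaching a Weinstein $2$-handle along a Legendrian representative $\L$ of $K$ itself, with $\tb(\L)=n+1$ and $\rot(\L)=\pm\langle c_1(J),[\Sigma]\rangle$. Eliashberg's theorem gives no such thing: it says a Stein domain admits \emph{some} handle decomposition in Legendrian normal form, not that the \emph{given} decomposition (one $2$-handle along $K$ with framing $n$) can be put in that form. Indeed, this assumption would immediately give $\sf(K)\le\maxtb(K)-1$, which is exactly what Theorem \ref{Thm:largegaps} of this paper disproves: the Stein structures on $X_{-m}(Q_m)$ there come from a Legendrian representative of a \emph{different} knot $P_m$ with the same trace, and $-m$ can be arbitrarily larger than $\maxtb(Q_m)$. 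So the reduction to the Plamenevskaya inequality $\tb(\L)+|\rot(\L)|\le 2\tau(K)-1$ is not available in general (the paper only notes it suffices in the extreme case $\sf(K)=\maxtb(K)-1$). Your treatment of the $\epsilon(K)=1$ case has a second problem: the claimed strengthening $\tb(\L)+|\rot(\L)|\le 2\tau(K)-3$ is not a theorem of Hom and is false, e.g.\ for the right-handed trefoil ($\epsilon=1$, $\tau=1$) the Legendrian with $\tb=1$, $\rot=0$ violates it.

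The paper's route avoids Legendrian representatives of $K$ entirely. It uses only that a Stein cobordism $W_n(K):S^3\to S^3_n(K)$, with its canonical $\spinc$ structure, induces a nontrivial map on $\hfhat$ (functoriality of the contact invariant, since $c(\xi_{std})\neq 0$). Theorem \ref{Thm:constraint} then shows, via the Ozsv\'ath--Szab\'o mapping cone formula for integer surgeries and an analysis of the maps $v_s,h_s$, that any $\spinc$ structure $\s$ on $W_n(K)$ inducing a nontrivial map must satisfy $|\langle c_1(\s),\Sigma\rangle|+n\le 2\tau(K)$, giving \eqref{weakbd}. The refinement \eqref{strongbd} comes from Corollary \ref{Cor:constraint}: when $\epsilon(K)=1$ the maps $v_{\tau(K)}$ and $h_{\tau(K)}$ induce distinct maps in homology, so the same mapping-cone argument kills the inclusion of $B_s$ already for $s\ge\tau(K)$, improving the bound by $2$. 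To repair your proposal you would need to replace the Legendrian realization step by some argument extracting constraints directly from the Stein structure on the cobordism, which is precisely what the Floer-homological argument does.
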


The ideas for the proofs are as follows. For Theorem \ref{Thm:largegaps}, we make use of work of Osoinach \cite{Oso06}, extended by Abe-Jong-Luecke-Osoinach \cite{AJLO14}, which gives a method to produce, for $m\in \N$, pairs of distinct knots $P_{m}$, $Q_{m}$ such that $X_{-m}(P_m) \cong X_{-m}(Q_m)$. If $X_{-m}$ denotes this common 4-manifold, then $X_{-m}$ is Stein whenever $-m$ is less than the maximal Thurston-Bennequin number of either $P_m$ or $Q_m$. The main work in the proof of Theorem \ref{Thm:largegaps} is in estimating these maximal Thurston-Bennequin numbers, and in particular we show that for our examples $\maxtb(P_m) =  -m+1$ while $\maxtb(Q_m)\leq -2m+3$. It follows that $X_{-m}$ is Stein, but the framing coefficient $-m$ can be made arbitrarily larger than $\maxtb(Q_m)$. The required estimates on $\maxtb$ are derived from Khovanov homology, using in particular a theorem of Ng \cite{Ng05}. This proof occupies Section \ref{Sec:largegaps}.

Theorem \ref{Thm:upperbd} follows from observing that a Stein cobordism between 3-manifolds induces a nontrivial homomorphism in Heegaard Floer homology, and using the techniques available from knot Floer theory to constrain the framings for which such a homomorphism is possible. The details are carried out in Section \ref{Sec:upperbd}.

\subsection*{Further Remarks and Questions} 

The proof of Theorem \ref{Thm:largegaps} gives examples of knots $J$ and an individual framing $m'\gg\maxtb(J)$ such that $X_{m'}(J)$ is Stein. As remarked previously, it is also true that $X_n(J)$ is Stein for any $n<\maxtb(J)$. It is not obvious, however, whether $X_n(J)$ is Stein when $\maxtb(J)\leq n < m'$. For a given knot $K$ one might ask whether the set of framings $n$ such that $X_n(K)$ is Stein can contain ``gaps'' of this sort, or whether $X_n(K)$ is Stein for every $n\leq \sf(K)$.

One might also ask whether the stronger bound \eqref{strongbd} in Theorem \ref{Thm:upperbd} always holds, or whether there exist examples of knots realizing equality in \eqref{weakbd}. Work of Plamenevskaya \cite{Olga2004} shows that for any Legendrian knot $K$ in $S^3$ one knows $\tb(K) + |\rot(K)| \leq 2\tau(K) -1$. Since $\tb(K)-1$ is a framing for which the trace of the $(\tb(K)-1)$-surgery admits a Stein structure, and in this cobordism the corresponding Chern number is exactly $\rot(K)$, we see that for  the extreme case $\sf(K) = \maxtb(K) - 1$, the inequality \eqref{strongbd} is true without the assumption on $\epsilon(K)$.  However, with our methods the two cases in the theorem cannot be avoided, in the sense that for knots with $\epsilon = 0$ or $-1$ one can always find a framing and a \spinc structure on the corresponding surgery cobordism inducing a nontrivial map in Floer homology, such that the sum of the framing and the Chern number is equal to $2\tau$.

Finally, we remark that in many cases Theorem \ref{Thm:upperbd} refines the upper bound on $\sf(K)$ given by the adjunction inequality for Stein manifolds. The adjunction inequality implies that if $X_n(K)$ is Stein with first Chern class $c$, then $n + |\langle c, \Sigma\rangle| \leq 2g_*(K)-2$.  When $\epsilon(K) =1$ it is clear that Theorem \ref{Thm:upperbd} improves on this from the fact that $|\tau(K)|\leq g_*(K)$ \cite[Corollary 1.3]{OSfourball} (strictly, the improvement comes via Theorem \ref{Thm:constraint} and Corollary \ref{Cor:constraint} below, applied to the given $n$ and $c$). If $\epsilon(K) = -1$ and $\tau(K)<0$, then $g_*(K) >0$, so the right-hand side of \eqref{weakbd} is negative but the adjunction inequality gives a nonnegative bound for $\sf(K)$. If $\epsilon(K) = -1$ and $\tau(K)\ge 0$, then Corollary 4 of \cite{Hom1} shows $\tau(K)\leq g_*(K) - 1$, so that \eqref{weakbd} is at least as strong as the adjunction bound. If $\epsilon(K) = 0$ then $\tau(K) = 0$ (c.f. \cite{Hom1}), so \eqref{weakbd} is at least as good as adjunction unless $g_*(K) = 0$, i.e., unless $K$ is slice. 
\subsection*{Acknowledgements} We would like to thank John Etnyre, David Gay and John Luecke for helpful discussions and their interest in our work. We would also like to thank Allison Miller for comments on an early draft of the paper. T.\ M.\ was supported in part by NSF grant DMS-1309212; L.\ P.\ was partially supported by an NSF GRFP fellowship; F.\ V.\ was partially supported by an AMS--Simons Travel Grant.  
\section{Proof of Theorem \ref{Thm:largegaps}}\label{Sec:largegaps}
Consider the knots $K_n$ and $K_{n,t}^s$ as in Figure~\ref{fig:KnKnt} for $n,t,s \in \bZ$, where labeled boxes represent positive half twists, as illustrated in Figure~\ref{HalfTwist}, and t is even.
\begin{figure}[t]
    \centering
   \includegraphics[scale=.4]{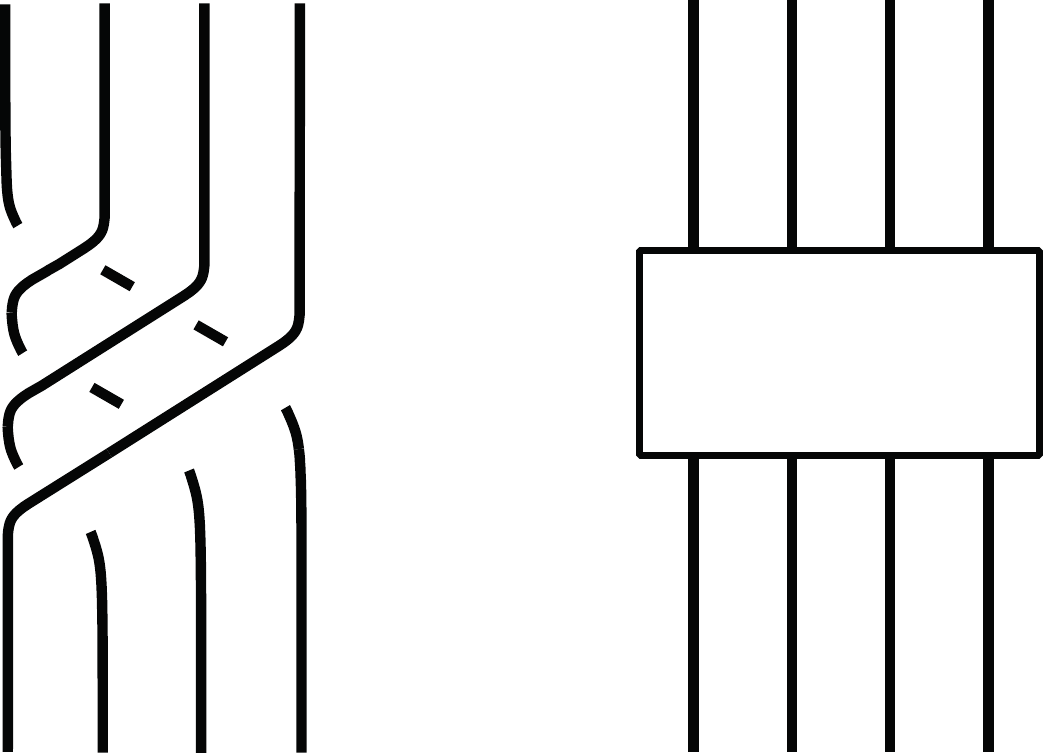}
\put(-27,42){\Large$1$}
\put(-75,42){\Huge{=}}
    \caption{In all the knot diagrams throughout the paper, labeled boxes represent positive half twists. Here, an example is depicted.}
    \label{HalfTwist}
\end{figure}
\begin{figure}[b]
    \centering
   \includegraphics[scale=.55]{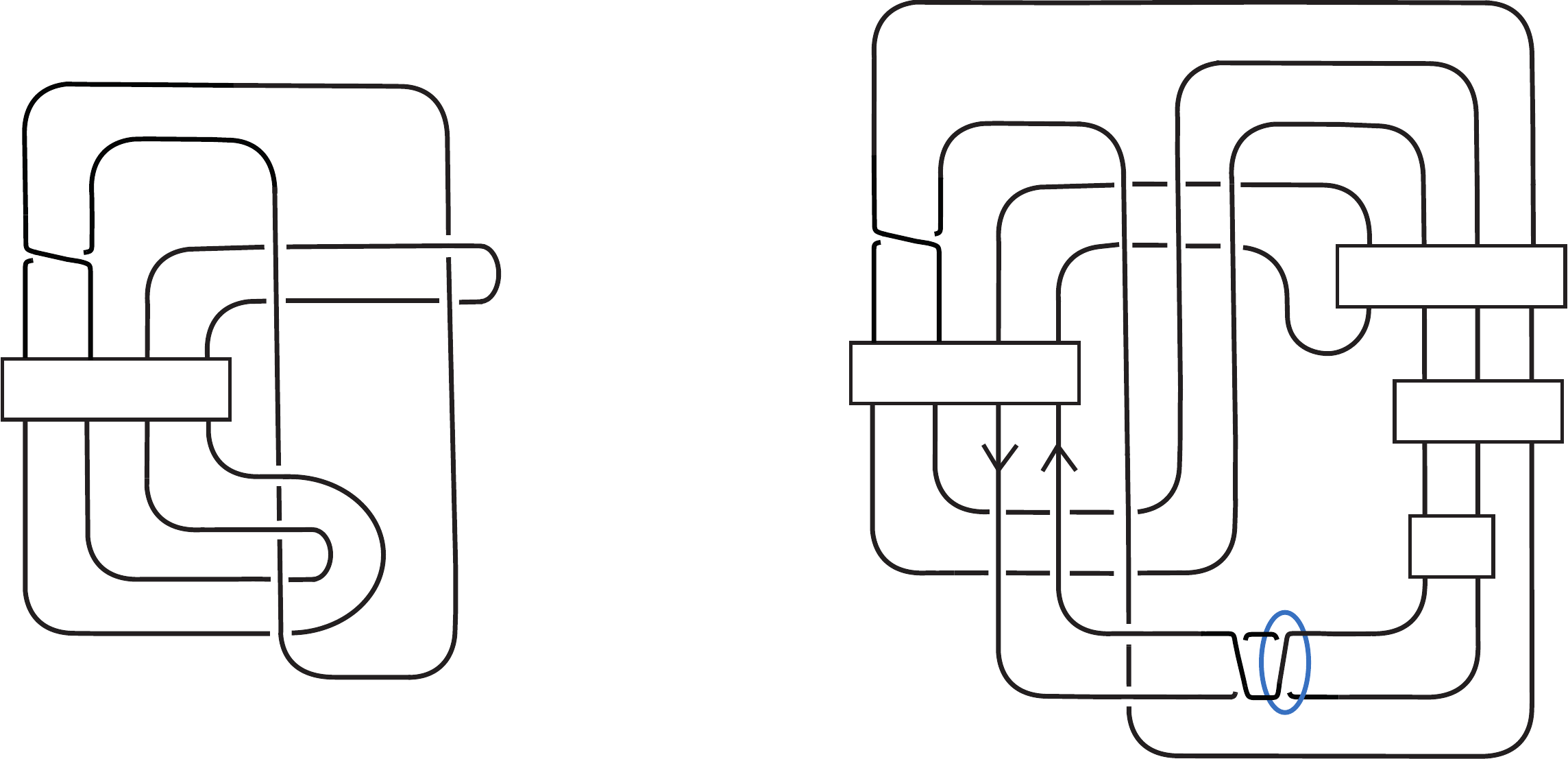}
\put(-340,83){$n$}
\put(-143,86){$n$}
\put(-37,108){$-2$}
\put(-24,76){$t$}
\put(-29,46){$s$}
    \caption{The knots $K_n$ (left) and $K_{n,t}^s$ (right).}
    \label{fig:KnKnt}
\end{figure}

The essential point in the proof is that $X_{t/2}(K_n)$ is diffeomorphic to $X_{t/2}(K_{n,t}^0)$. To see this, observe that in the terminology of \cite{AJLO14} the diagram on the left of Figure \ref{fig:KnKnt} is a simple annulus presentation for $K_n$. Further, the knot $K^0_{n,t}$ is obtained from $K_n$ by ($*\frac{t}{2}$)-twisting, which is defined by \cite{AJLO14} and is a natural modification of the annulus twisting defined by \cite{Oso06}. Therefore Theorem 3.10 of \cite{AJLO14} implies that $X_{t/2}(K_n) \cong X_{t/2}(K_{n,t}^0)$.

In the notation of the introduction, we take $P_m = K_{m-3}$ and $Q_m = K_{m-3, -2m}^0$ so we have $X_{-m}(P_m) \cong X_{-m}(Q_m)$. The main work toward Theorem \ref{Thm:largegaps} is contained in the following estimates on the Thurston-Bennequin numbers of $K_n$ and $K_{n,t}^s$.

\begin{lem}
For $n\ge 0$, $\overline{tb}(K_n)= -2-n$.
\label{Lem:tbKn}
\end{lem}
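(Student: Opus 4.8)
The plan is to establish the equality $\maxtb(K_n) = -2-n$ for $n \geq 0$ by proving matching upper and lower bounds. The lower bound $\maxtb(K_n) \geq -2-n$ should come from exhibiting an explicit Legendrian representative of $K_n$: one takes the diagram on the left of Figure~\ref{fig:KnKnt}, front-projects it, and computes the Thurston-Bennequin number of the resulting Legendrian via the standard formula $\tb = \mathrm{writhe} - \tfrac12(\#\text{cusps})$. Since the box labeled $n$ contains positive half-twists, stabilizing/destabilizing and choosing the Legendrian realization carefully should yield exactly $\tb = -2-n$ for this family; I would draw the Legendrian front explicitly and count. (One should double-check small cases, e.g. identifying what knot $K_0$ or $K_1$ actually is, to make sure the count is right.)

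For the upper bound $\maxtb(K_n) \leq -2-n$, the natural tool—given the paper's stated strategy—is Khovanov homology via Ng's theorem \cite{Ng05}, which bounds $\maxtb(K)$ below the maximal $\zee$-grading (more precisely, the minimal $i+j$ grading) supporting nonzero Khovanov homology, or the analogous sharp bound stated in terms of the Kauffman or $\mathfrak{sl}_2$ data. Concretely, Ng's bound reads $\maxtb(K) \leq \min\{ i - j : \overline{Kh}^{i,j}(K) \neq 0\} - 1$ (I would recall the precise normalization from \cite{Ng05}), so it suffices to compute enough of the Khovanov homology of $K_n$—or at least locate the relevant extreme grading—to conclude. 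Because $K_n$ is presented by an explicit diagram with a single parametrized twist region of $n$ positive half-twists, I would set up a skein long exact sequence (the unoriented Khovanov resolution of one crossing in the twist box) and induct on $n$: resolving one crossing relates $K_n$ to $K_{n-1}$ and to a simpler link (an unknot or unlink coming from the annulus structure), and tracking the gradings through the exact sequence gives the extreme grading of $K_n$ as a linear function of $n$ with the right slope. Alternatively, since these diagrams are close to alternating or quasi-alternating, one might read off the extreme Khovanov grading directly from the Jones polynomial / signature, which may be cleaner.

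The main obstacle I expect is pinning down the Khovanov-homological extreme grading precisely enough—getting the constant right, not just the $n$-dependence—and making sure the skein exact sequence does not have a cancellation that shifts the extreme grading unexpectedly (i.e., verifying the relevant corner of Khovanov homology is genuinely nonzero, not just that it could be). This typically requires either an explicit base-case computation (say $n=0$) combined with a clean inductive grading shift, or an appeal to thinness of the diagrams. A secondary point to be careful about: the lemma is stated for $n \geq 0$, so I should make sure both the Legendrian front construction and the Khovanov induction genuinely use (or at least are valid under) that hypothesis, and flag where positivity of the twists is essential—it is what makes the lower-bound Legendrian front efficient and what controls the sign of the grading shift in the skein sequence. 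Once both bounds read $-2-n$, the lemma follows immediately.
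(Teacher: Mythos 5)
Your plan is essentially the paper's proof: the lower bound comes from exhibiting an explicit Legendrian front of $K_n$ with $\tb = -2-n$, and the upper bound from Ng's Khovanov bound, with the extreme collapsed grading $\kappa(K_n) = -2-n$ computed by resolving one crossing of the twist region and inducting on $n$ from an explicitly computed base case (the paper identifies $K_0$ as the mirror of $8_{20}$). The only details to settle are exactly the ones you flagged: in the paper's normalization Ng's bound reads $\maxtb(K) \le \kappa(K)$ with no extra $-1$, and the simpler link appearing as the $0$-resolution in the induction is a twisted Whitehead link with $\kappa = 0$ rather than an unknot or unlink.
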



\begin{thm}
For $n\ge 0$, $\overline{tb}(K_{n,2(-3-n)}^0)\le -3-2n$.
\label{Thm:tbKnts}
\end{thm}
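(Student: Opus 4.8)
The plan is to bound $\maxtb(K_{n,t}^0)$ from above using the Khovanov homology estimate of Ng \cite{Ng05}, which asserts that for any knot $L$ one has $\maxtb(L) \le -s_{\min}(L) - 2$, where $s_{\min}$ is the minimal grading in the reduced Khovanov homology (equivalently, a bound of the form $\maxtb(L) \le \min\{j : \KH^{i,j}(L) \ne 0 \text{ for some } i\} - 2$ in the appropriate normalization). So the task reduces to a computation, or at least a one-sided estimate, of the relevant corner of the Khovanov homology of the family $K_{n,2(-3-n)}^0$. Concretely, I would first set $t = 2(-3-n)$ in the right-hand diagram of Figure~\ref{fig:KnKnt} and simplify: the $s$-box is $0$, and the $-2$ and $t$ boxes combine with the $n$-strands to give a presentable diagram whose complexity grows linearly in $n$. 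The goal is to show the bottom-left corner of the Khovanov complex forces a generator in a sufficiently low $j$-grading, yielding $\maxtb \le -3-2n$.

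The key steps, in order, are: (1) fix a convenient diagram $D$ for $K_{n,2(-3-n)}^0$ with a small number of negative crossings and the remaining crossings positive, so that the lowest quantum grading appearing in the cube of resolutions is controlled; (2) identify the all-$0$ (or all-$1$) resolution and the adjacent resolutions, and show that the corresponding portion of the Khovanov chain complex has homology in the extremal $j$-grading --- i.e., that nothing kills the extremal generator; (3) translate the resulting bound on $s_{\min}$ through Ng's inequality to conclude $\maxtb(K_{n,2(-3-n)}^0) \le -3-2n$. An alternative, possibly cleaner, route for step (2) is to use the behavior of the Khovanov (or $\mathfrak{sl}_2$) bound under the local moves relating $K_{n,t}^0$ to a simpler knot --- for instance, tracking how the Thurston--Bennequin bound from the Kauffman polynomial or from Khovanov homology changes as $t$ decreases (each pair of added negative half-twists should drop the relevant extremal grading), reducing to a base case at small $|n|$ that one checks directly.

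I also want to double-check the consistency with Lemma~\ref{Lem:tbKn}: since $K_{n,t}^0$ and $K_n$ share the trace $X_{t/2}$, and $\maxtb(K_n) = -2-n$ gives that $X_{-3-n}$ is Stein for the relevant $m = 3+n$, the estimate $\maxtb(K_{n,2(-3-n)}^0) \le -3-2n$ is exactly what makes the framing $-m = -3-n$ exceed $\maxtb(Q_m)$ by roughly $m$; any arithmetic slip would be caught here.

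The main obstacle will be step (2): controlling the extremal corner of Khovanov homology for an entire infinite family uniformly. Computing $\KH$ of a single knot is routine, but here one needs a structural argument --- either an explicit chain-level cancellation argument showing the extremal generator survives, or an inductive "adding a negative twist region" argument with a clean base case --- valid for all $n \ge 0$ simultaneously. I expect the cleanest approach is to phrase the twisting $K_{n,t}^0 \rightsquigarrow K_{n,t-2}^0$ as a tangle replacement and invoke a long exact sequence or a skein-type inequality for the Khovanov $\maxtb$ bound, so that the extremal grading provably decreases by the right amount at each step, with the $n \ge 0$ hypothesis ensuring the base case has the expected extremal Khovanov generator. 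Everything else (choosing the diagram, bookkeeping the gradings, invoking \cite{Ng05}) is mechanical.
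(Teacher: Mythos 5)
Your overall strategy---bound $\maxtb$ via Ng's Khovanov-homology invariant and control the extremal grading for the whole family by resolving crossings in the twist regions, inducting with small base cases checked directly or by computer---is the same route the paper takes (Theorem~\ref{Thm:Ngbound} combined with Theorem~\ref{Thm:kappaKnts}). However, what you have written is a plan rather than a proof: the step you yourself flag as the main obstacle, namely pinning down the extremal grading uniformly in $n$ (and in $t$), is exactly where all the content lies, and your sketch of how to close it would not work as stated. First, your guiding heuristic that ``each pair of added negative half-twists should drop the relevant extremal grading'' is not what happens for this family: in the collapsed grading $v=i-j$ one has $\kappa(K^0_{n,t})=-3-2n$ for \emph{every} even $t\le 2$, i.e.\ $\kappa$ is constant as $t$ decreases; the needed gap between the framing $t/2=-3-n$ and the bound comes from the $n$-dependence of $\kappa$, not from a per-twist drop. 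An induction on $t$ premised on a grading drop is therefore aimed at the wrong quantitative statement. Second, a single skein long exact sequence (Lemma~\ref{Lem:LES}) determines $\kappa$ of one term only when you already know the extremal groups of \emph{both} other terms and know that the unknown $\kappa$ lies in the range where the sequence gives isomorphisms; this is why the paper must introduce and control the auxiliary family $K^s_{n,t}$ with $s=4$ (Lemma~\ref{Lem:changes}, proved by resolving to Hopf links as in Remark~\ref{rmk:Hopf}), the links $K^H_{n,t}$, $R_n$ and $Q_n$, and run a two-parameter induction (on $n$ for the base cases $t=2$ and $t=4$, then decreasing in $t$, using knowledge of both $K^0_{n,t}$ and $K^0_{n,t-2}$ to reach $K^0_{n,t-4}$), tracking hypotheses like $\kappa\le-3$, $\kappa(K^4_{n-1,2})\le -5-n$, or $\kappa(R_n)\le -9$ at each stage so that the extremal generator provably survives.

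None of that bookkeeping is ``mechanical,'' and without it your step (2) remains an open gap; an argument via the all-$0$ or all-$1$ resolution of a fixed diagram is unlikely to succeed here because these diagrams are far from adequate in the twist regions, which is precisely why the paper resorts to the iterated resolution trees. A smaller point: state Ng's bound in the normalization you will actually use---the paper uses $\maxtb(K)\le\kappa(K)$ with $\kappa$ the minimal collapsed grading $v=i-j$ of unreduced $\KH$ over $\bQ$, which is not literally the reduced-homology formula you quote, and grading conventions are easy to get off by a constant.
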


With these results and the preceding remarks, the proof of Theorem \ref{Thm:largegaps} is done:

\begin{proof} [Proof of Theorem \ref{Thm:largegaps}]
For $m\geq 3$, let $Q_m = K_{m-3,-2m}^0$, and $P_m = K_{m-3}$. Then by the previous results $\maxtb(P_m) = -m+1$ and $\maxtb(Q_m) \leq -2m+3$.  Since $-m<\maxtb(P_m)$, the common 2-handlebody $X_{-m}(P_m)\cong X_{-m}(Q_m)$ is Stein. Therefore $\sf(Q_m) \geq -m$, hence
\[
\sf(Q_m) - \maxtb(Q_m) \geq -m - (-2m+3) = m-3.
\]
Take $J_m = Q_{m+3}$.
\end{proof}
Theorem \ref{Thm:tbKnts} is the main technical result; Subsection \ref{Subsec:tbbound} is dedicated to its proof. 
\subsection{Input from Khovanov homology}\label{Subsec:background}

In this subsection we briefly recall the background we need to prove Theorem~\ref{Thm:tbKnts}. We mainly use the notation of~\cite{Ng05}. 

Khovanov homology is an invariant of oriented links in $S^3$ which associates to a link $L$ a bigraded abelian group $HKh^{i,j}(L)$~\cite{Kho00}. We will be concerned in particular with Khovanov homology collapsed to a single grading $v=i-j$, which we will denote $HKh^{*}(L)$. It will be convenient to take the tensor product $HKh^{*}(L)\otimes \bQ$. We still denote the tensor product by $HKh^{*}(L)$. 

Recall that an oriented Legendrian link $\mathcal{L}$ in $S^3$ equipped with the standard contact structure admits a front projection to the $xz$ plane with singularities consisting of only double points and cusps, and without vertical tangencies~\cite{atkowski}. The Legendrian condition means that at a double point the strand with the lower slope passes in front of the other strand, recovering crossing information at the double points and yielding an oriented link diagram with cusps and no vertical tangencies, denoted $F$. Let $w(F)$ be the writhe of $F$ (the signed number of crossings) and let $c(F)$ be half the number of cusps of $F$. If $\L$ has a single component, then the Thurston-Bennequin number of $\mathcal{L}$ agrees with $w(F) - c(F)$. See, for instance,~\cite{Etn03}.

In~\cite{Ng05}, Ng gives an upper bound for $\overline{tb}(K)$ in terms of data provided by the Khovanov homology of $K\subset S^3$. 

\begin{defin} For a link $L$ in $S^3$, define 
\[
\kappa(L):=\mathrm{min}\{ v | HKh^v(L)\neq 0\}.\]
\end{defin}

\begin{thm}[Corollary 2 of \cite{Ng05}] For a knot $K\subset S^3$, 
$$\overline{tb}(K)\le \kappa(K).$$
\label{Thm:Ngbound}
\end{thm}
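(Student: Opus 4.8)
The plan is to prove the equivalent vanishing statement that $HKh^v(K)=0$ for every $v<\overline{tb}(K)$. Since $\overline{tb}(K)=\max_{\L}\tb(\L)$ over Legendrian representatives $\L$ of $K$, and $\tb(\L)=w(F)-c(F)$ for a front projection $F$ of $\L$, it suffices to fix a single front $F$ and show $HKh^v(K)=0$ whenever $v<w(F)-c(F)$. First I would fix the bookkeeping: viewing $F$ with its cusps rounded as an ordinary diagram $D$ of $K$ having the same crossings, so that $w(D)=w(F)$, recall that a Khovanov generator is a resolution of the crossings of $D$ together with a labeling of its circles, and that its $v$-grading is determined by $w(D)$ and the circle labels. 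In the conventions of~\cite{Ng05} the minimum value of $v$ occurring in the chain complex is then $w(F)-k_{\max}(D)$, where $k_{\max}(D)$ denotes the largest number of circles appearing in any resolution of $D$.

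The essential point, which I would confront at the outset, is that this chain-level minimum is in general strictly below $w(F)-c(F)$: a maximally disoriented resolution of a front typically has more than $c(F)$ circles. For instance, a front of the right-handed trefoil realizing $\overline{tb}=1$ has $w(F)=3$ and $c(F)=2$, and one checks it admits a resolution with three circles, so the chain complex is already nonzero in the grading $w(F)-3=0$, strictly below $w(F)-c(F)=1$. Thus the bound cannot be read off from the chain groups; it is genuinely a statement about homology, asserting that every generator in gradings $v<w(F)-c(F)$ must cancel. The core of the argument is therefore an acyclicity result---that the subquotient of the Khovanov complex of $F$ supported in gradings $v<w(F)-c(F)$ has vanishing homology---and this is precisely where the geometry of a front (no vertical tangencies, so that every turn-back is forced to occur at a cusp) must enter, since for an arbitrary diagram no such vanishing holds.

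To establish this acyclicity I would induct on the number of crossings of $F$, applying the unoriented skein long exact sequence at a chosen crossing to relate $HKh(D)$ to the Khovanov homologies of its two resolutions while tracking the grading shifts through the changes in writhe and in circle structure. The base case is a crossingless front, namely a disjoint union of nested cusp-circles representing a Legendrian unlink, for which $HKh$, $w$, and $c$ are computed directly and the inequality is checked by hand. The inductive step needs a combinatorial input bounding the circle counts of the low-$v$ resolutions by the cusps---morally, that the resolutions controlling the surviving minimal grading (such as the oriented, Seifert resolution) carry at most $c(F)$ circles---so that the anticipated cancellations indeed occur. The main obstacle I anticipate is making this cancellation precise: one must show that the grading shifts in the skein sequence never permit a surviving class below $w(F)-c(F)$, and since resolving a crossing does not return a front, the cusp bookkeeping that links the homological estimate to $c(F)$ must be maintained by hand throughout the induction. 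This is the step where the cusp structure of fronts, rather than the bare crossing data of $D$, does the real work.
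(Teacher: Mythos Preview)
The paper does not prove this statement at all: it is quoted as Corollary~2 of \cite{Ng05} and used as a black box. So there is no argument in the paper to compare your proposal against.

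That said, your outline is pointed in the right direction and in fact mirrors Ng's own strategy: fix a front $F$, induct on the number of crossings via the unoriented skein long exact sequence, and control the collapsed grading throughout by the quantity $w(F)-c(F)$. You have also correctly identified the crux: the chain-level minimum lies below $w(F)-c(F)$, so the content is homological, and the obstacle is that a naive resolution of a crossing destroys the front property, so you lose access to the cusp count that drives the bound.

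What you are missing is precisely Ng's resolution of that obstacle, and without it the induction does not close. The point is that in a front diagram, because the overstrand at every crossing is the one of smaller slope, \emph{both} smoothings of any crossing can again be drawn as fronts (no vertical tangencies are created), and one checks that the quantities $w-c$ for the two resolved fronts and the grading shifts $p,q,r$ in the long exact sequence match up so that the inductive hypothesis on each resolution forces $HKh^v(F)=0$ for $v<w(F)-c(F)$. Your proposal gestures at this (``the cusp structure of fronts \ldots\ does the real work'') but stops short of supplying it. Until you verify that the two smoothings are again fronts with the required values of $w-c$, and compute the shifts explicitly to see the induction goes through, what you have written is a plan rather than a proof.
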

One method for calculating $\kappa(L)$ is to resolve  the link $L$ into simpler links and use a long exact sequence. Figure~\ref{resolutions} depicts two resolutions of a crossing $c$ of a diagram $D$ of $L$; we denote the resolutions by $Res_0(D, c)$ and $Res_1(D, c)$. We drop $(D,c)$ from the notation when the diagram $D$ and the specified crossing $c$ are understood from context. 
\begin{figure}[t]
\includegraphics[scale=.55]{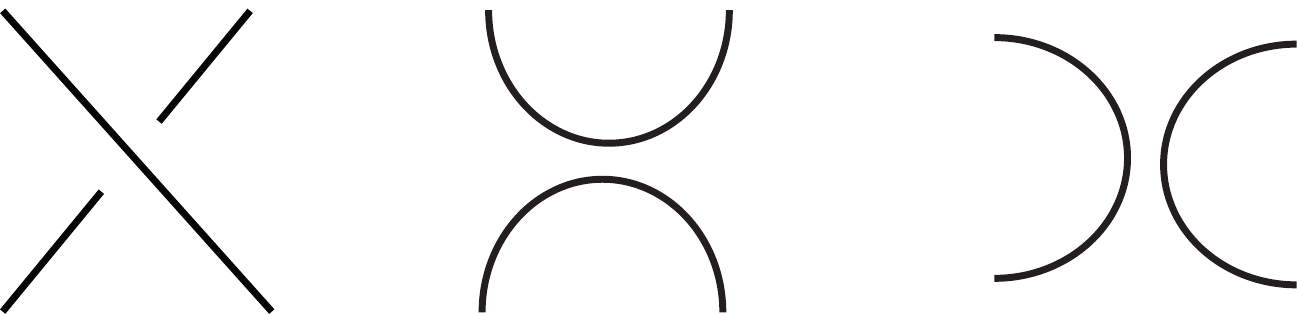}
\caption{The crossing $c$ in a diagram $D$ with 0 and 1 resolutions, respectively.}
\label{resolutions}
\end{figure}

\begin{lem}[Lemma~6 of~\cite{Ng05}]\label{Lem:LES}
There is a long exact sequence of the form
\begin{equation}\label{LES}
\xymatrix{ \KH(Res_0(D)) \ar[rr]^{(r)} &&
\KH(Res_1(D))
\ar[dl] ^{(p)}\\
& \KH(D) , \ar[ul]^{(q)} & }
\end{equation}
with grading shifts $q=w(Res_0(D))-w(D), p=w(D)-w(Res_1(D))$, and $r=-1-p-q$. 
\end{lem}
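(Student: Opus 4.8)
The plan is to realize the bigraded Khovanov complex of $D$ as a mapping cone of the complexes of the two resolutions, extract the long exact sequence in homology from the corresponding short exact sequence of complexes, and then track how the normalization conventions descend to the single grading $v = i-j$. The algebraic skeleton here (mapping cone $\Rightarrow$ exact triangle) is entirely standard, so the real content is the grading bookkeeping; the engine of that computation is the following observation. In Khovanov's cube-of-resolutions model \cite{Kho00}, for an oriented diagram $D$ with $n_+$ positive and $n_-$ negative crossings, a generator supported on a complete resolution with internal quantum grading $q_0$ (the signed count of its $1$- and $x$-labels) sitting at cube-height $h$ lies in bidegree $i = h - n_-$ and $j = q_0 + h + n_+ - 2n_-$. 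Collapsing,
\[
v = i - j = -q_0 - (n_+ - n_-) = -q_0 - w(D),
\]
so the $v$-grading is independent of the cube-height and depends on the diagram only through its writhe.

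First I would fix the distinguished crossing $c$ and split the cube $\{0,1\}^n$ into the subcube where the $c$-coordinate is $0$ and the subcube where it is $1$. Since the Khovanov differential only increases coordinates, the $c=1$ part is a subcomplex isomorphic (up to normalization) to $CKh(Res_1)$ and the $c=0$ part is the quotient isomorphic to $CKh(Res_0)$, yielding the short exact sequence $0 \to CKh(Res_1) \to CKh(D) \to CKh(Res_0) \to 0$ and hence the exact triangle with $p$ the inclusion, $q$ the projection, and $r\colon \KH(Res_0)\to\KH(Res_1)$ the connecting map induced by the saddle edge map at $c$. The grading shifts now follow from the displayed formula: the internal grading $q_0$ of a generator is intrinsic to its resolution and does not change with the diagram it is regarded as living in, so the only difference between the $v$-grading of a class in $CKh(Res_i)$ and in $CKh(D)$ is the writhe term. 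This produces the inclusion shift $w(D) - w(Res_1)$ and the projection shift $w(Res_0) - w(D)$, matching $p$ and $q$ once the sign convention for the degree of a map is fixed. For $r$ one adds to the writhe renormalization the homological degree $+1$ of the connecting homomorphism: the edge maps are the Frobenius multiplication and comultiplication on $V$, which preserve $j$ and raise $i$ by one, so they carry $v$-degree $+1$ intrinsically; combined with the renormalization this forces $r = -1 - p - q$, consistent with the requirement that the three shifts around an exact triangle sum to $\pm 1$.

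The delicate step — and the one I expect to be the main obstacle — is the treatment of the disoriented resolution. Exactly one of $Res_0, Res_1$ inherits a consistent orientation from $D$; the other does not, yet an orientation is needed to define its $n_\pm$ and hence its bigrading. I would fix an orientation on that resolution, compute its writhe $w(Res_i)$ accordingly, and verify that the shift formulas are insensitive to the choice. This requires care because, although the $v$-grading of the knot $D$ is orientation-independent, the intermediate resolutions may be multi-component links whose writhe depends on the relative orientations of their components. Confirming that the writhe differences reproduce $p$ and $q$ with the signs in Ng's convention, and that the $-1$ in $r$ comes precisely from the saddle's homological degree, is the content that must be checked carefully.
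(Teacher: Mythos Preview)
The paper does not prove this lemma at all; it is quoted verbatim as Lemma~6 of \cite{Ng05} and used as a black box. Your outline---realize $CKh(D)$ as the mapping cone of the saddle map between the two subcubes, pass to the long exact sequence, and read off the shifts from the writhe-dependence of the collapsed grading---is the standard argument and is essentially what underlies Ng's lemma, so there is nothing to compare against in this paper.

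That said, your grading bookkeeping has an internal sign inconsistency you should resolve rather than defer with ``once the sign convention is fixed.'' From your own formula $v=-q_0-w(D)$, the inclusion $CKh(Res_1)\hookrightarrow CKh(D)$ changes $v$ by $w(Res_1)-w(D)$ and the projection $CKh(D)\twoheadrightarrow CKh(Res_0)$ changes $v$ by $w(D)-w(Res_0)$; these are the \emph{negatives} of the values you then quote for $p$ and $q$. Likewise, you correctly observe that the saddle edge map has intrinsic $v$-degree $+1$, which forces the three $v$-degrees around the triangle to sum to $+1$; yet you conclude $r=-1-p-q$, i.e.\ a total of $-1$. Both discrepancies disappear once you commit to Ng's convention that the label $(d)$ on an arrow records minus the $v$-degree of the corresponding map (so that the labels sum to $-1$), but as written your argument silently flips signs twice and lands on the right answer by cancellation. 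Fix the convention explicitly at the outset and redo the three computations; the rest of your plan, including the remark about orientation-independence of the disoriented resolution, is fine.
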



{\rmk \label{rmk:orientations} These resolutions will not necessarily induce a well-defined orientation on $Res_0$ and $Res_1$. On those components of $Res_0$ (respectively $Res_1$) that do not inherit a well-defined orientation, we may choose any arbitrary orientation. Although the shifts $p, q, r$ in the sequence depend on the chosen orientations, the gradings on HKh of these resolutions also depend on these orientations in a corresponding way. As such, the conclusions one draws about $HKh^*(D)$ from the sequence are independent of the choice of orientation on the resolutions.}

We will prove Theorem~\ref{Thm:tbKnts} by computing $\kappa(K_{n,2(-3-n)}^0)$ and applying Theorem~\ref{Thm:Ngbound}. The calculation proceeds inductively using Lemma \ref{Lem:LES}, but we will also employ Dror Bar-Natan's Fast KH routines (available at \cite{KAT}) to compute $\kappa$ for some small examples. We will say ``via computer'' throughout the paper to indicate when $\kappa$ was computed with these routines.  

{\rmk \label{rmk:Hopf}In using Lemma~\ref{Lem:LES}, we often get a Hopf link after resolving a crossing in our examples. Define $H^+$ and $H^-$ to be the oriented Hopf links as in Figure~\ref{fig:hopfs}. It is easy to compute directly, or see \cite{BN02} or \cite{KAT}, that $\kappa (H^+)=0$ and $\kappa(H^-)=-4$.}
\begin{figure}[h]
    \centering
   \includegraphics[scale=.4]{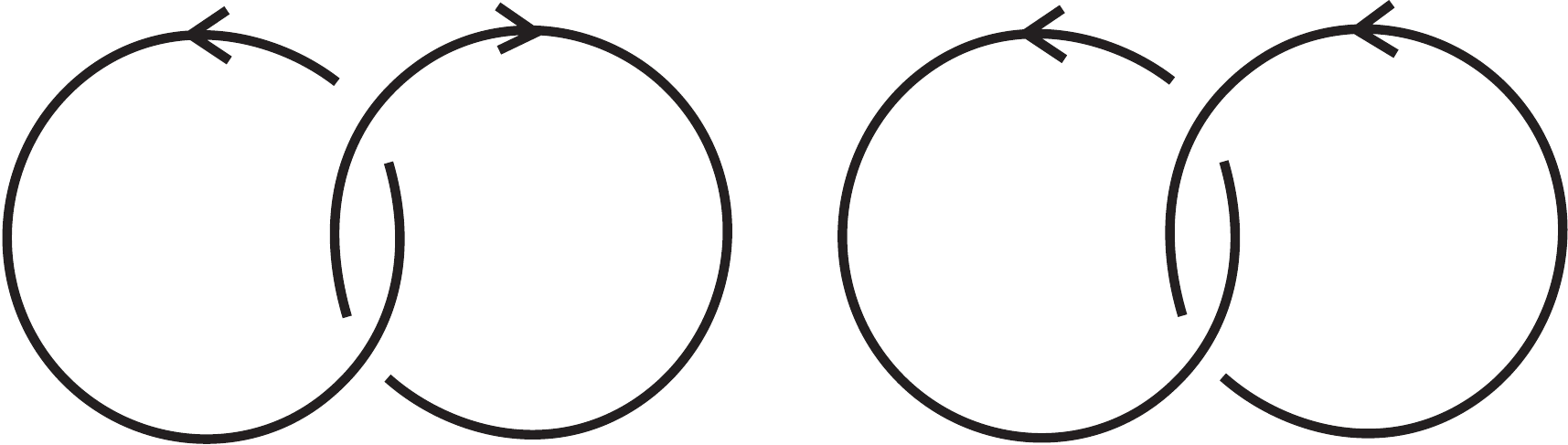}
    \caption{$H^+$ and $H^-$ respectively.}
    \label{fig:hopfs}
\end{figure}
\subsection{Computing a Thurston-Bennequin upper bound}\label{Subsec:tbbound}

This section is devoted to proving Theorem~\ref{Thm:tbKnts} by computing $\kappa(K_{n,t}^0)$. In fact, Theorem \ref{Thm:tbKnts} follows immediately by taking $t = 2(-3-n)$ in the following theorem, and applying Theorem \ref{Thm:Ngbound}.

\begin{thm}\label{Thm:kappaKnts} For any $n\geq 0$ and any even $t\leq 2$, we have $\kappa(K^0_{n,t}) = -3-2n$.
\end{thm}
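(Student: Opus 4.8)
\textbf{Proof proposal for Theorem \ref{Thm:kappaKnts}.}

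The plan is to compute $\kappa(K^0_{n,t})$ by a double induction: an outer induction on $n\geq 0$ and, for the inductive step, an inner induction (or direct analysis) on the parameter $t$ among even integers $t\leq 2$. The base cases $n=0$ (for all relevant small $t$) and the small values of $t$ will be handled ``via computer'' using the Fast KH routines, as the excerpt anticipates. For the inductive steps I would repeatedly apply the skein long exact sequence of Lemma \ref{Lem:LES} to a carefully chosen crossing in the standard diagram of $K^0_{n,t}$ coming from Figure \ref{fig:KnKnt}.

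First I would set up the resolutions. The natural crossings to resolve are those inside the labeled twist boxes: resolving a crossing in the box labeled $t$ should relate $K^0_{n,t}$ to $K^0_{n,t-2}$ (reducing the twisting) in one resolution, and to a simpler link—typically the relevant Hopf link $H^+$ or $H^-$ from Remark \ref{rmk:Hopf}, or an unknot, or a smaller $K^0_{n',t'}$—in the other. Similarly, resolving a crossing in the box labeled $n$ should drop $n$ by one while the other resolution produces something whose $\kappa$ is known or already computed. For each such resolution I need to: (i) identify the two resolved links up to isotopy by manipulating the diagram; (ii) compute the grading shifts $p$, $q$, $r$ from the writhe formulas in Lemma \ref{Lem:LES}, being careful about orientations as in Remark \ref{rmk:orientations} (choosing arbitrary orientations on components that don't inherit one, and tracking the corresponding grading conventions so the final conclusion is orientation-independent); and (iii) read off the constraint on $\kappa(K^0_{n,t})$ that the exactness of \eqref{LES} imposes, given the inductively known values of $\kappa$ of the two resolutions.

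The arithmetic that makes this work is that if $\kappa(Res_0(D)) = a$ sitting in shifted grading, $\kappa(Res_1(D)) = b$ in its shifted grading, then the map $(r)$ either is or is not injective in the bottom grading, and correspondingly $\kappa(D)$ is pinned down. Concretely, one expects the map $HKh^*(Res_0) \to HKh^*(Res_1)$ to be injective on the lowest nonzero group (or the cokernel in the lowest relevant grading to be controlled), so that $\kappa(D)$ equals the minimum of $\{b+p,\ a+q+(\text{shift})\}$ or similar; getting $-3-2n$ out of this requires the two resolution values to conspire, which is exactly why the precise twist parameters in the statement ($t$ even, $t\leq 2$) matter. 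I would organize this as: prove $\kappa(K^0_{n,t}) \leq -3-2n$ by exhibiting the relevant generator/using the LES to push the bound down, and prove $\kappa(K^0_{n,t}) \geq -3-2n$ by showing no lower grading can be nonzero, again from the LES and the induction hypothesis.

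The main obstacle I anticipate is step (iii)—controlling the relevant connecting maps in the long exact sequence well enough to get an \emph{equality} rather than just an inequality. An inequality $\kappa(K^0_{n,t}) \leq -3-2n$ (which is all that Theorem \ref{Thm:tbKnts} and hence Theorem \ref{Thm:largegaps} actually need) should follow fairly mechanically from the LES by bounding below, since a nonzero class in a low grading of one resolution either survives to $HKh^*(D)$ or maps nontrivially, forcing a low class somewhere. The reverse inequality is more delicate: one must rule out classes in gradings below $-3-2n$, which means one must know that the relevant portion of the map $(r)$ is an isomorphism (or at least injective), and this typically requires either a rank/Euler-characteristic count (comparing with the Jones polynomial), an explicit identification of the map, or the computer base cases to anchor the induction tightly. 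I would lean on computing $\kappa$ for enough small $(n,t)$ via computer that the inductive bookkeeping on the shifts is the only thing left, and double-check the shift computations against a known example (such as recovering $\kappa(K_n)$ consistent with Lemma \ref{Lem:tbKn}, since $K^0_{n,t}$ should degenerate appropriately).
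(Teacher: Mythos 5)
Your high-level strategy---Ng's bound via $\kappa$, the skein long exact sequence of Lemma \ref{Lem:LES} with careful writhe/grading bookkeeping, computer-checked base cases, and an induction over $n$ and $t$---is the same as the paper's, but as a proof it has a concrete gap at the level of the resolutions themselves. Resolving a crossing in the $t$- or $n$-box of the diagram in Figure \ref{fig:KnKnt} does not relate $K^0_{n,t}$ to $K^0_{n,t-2}$ (or $K^0_{n-1,t}$) with a Hopf link or unknot as the partner resolution; the links that actually appear are more complicated, and the paper is forced to enlarge the family before any induction can close: it uses the knots $K^s_{n,t}$ with $s=4$ together with Lemma \ref{Lem:changes} (which relates $\kappa(K^4_{n,t})$ to $\kappa(K^0_{n,t})$ via four LES applications involving $H^{\pm}$), the auxiliary links $K^H_{n,t}$, $R_n$, $Q_n$, the torus link $T(-4,2)$ and a twisted Whitehead link. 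The induction is also organized differently from your sketch: it is a strong \emph{decreasing} induction on $t$ in steps of four, deducing the value at $t-4$ from the values at $t$ and $t-2$ via the two-level resolution tree of Figure \ref{fig:Resolutions1} (first $K^0_{n,t}$ resolves to $K^H_{n,t}$ and $K^4_{n,t-4}$, then $K^H_{n,t}$ resolves to $K^4_{n,t-2}$ and $K^0_{n,t-2}$), with inductions on $n$ confined to the base cases $t=2$ (Lemma \ref{Lem:kappaKn20}, which itself needs $R_n$ and $Q_n$) and $t=4$, where $\kappa(K^0_{n,4})=-2-n$ breaks the $-3-2n$ pattern and requires a separate adjustment in the inductive step.

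The more serious missing idea is the one you flag as your ``main obstacle'' and then defer: how equalities (and even the nonvanishing you need) come out of the long exact sequence. The paper's mechanism is that in \emph{every} application one of the three terms has a known $\kappa$ that is large relative to the inductively known low gradings, so its $HKh$ vanishes throughout the relevant range and the sequence collapses to isomorphisms there; the running hypotheses such as $\kappa(K^0_{n,t})\le -3$, $\kappa(R_n)\le -9$, $\kappa(K^4_{n-1,2})\le -5-n$ exist precisely to keep the inductive values inside that vanishing range. Your asymmetric assessment is also off: the inequality $\kappa(K^0_{n,t})\le -3-2n$ (the direction Theorem \ref{Thm:tbKnts} needs) is \emph{not} mechanical, because a nonzero low-grading class in one resolution may simply map isomorphically to the other resolution under the connecting map and never be detected by $HKh^*(D)$; forcing nonvanishing for $D$ again requires knowing the partner term vanishes in the appropriately shifted grading, i.e.\ the same collapse. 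Finally, computing ``enough small $(n,t)$ via computer'' cannot substitute for this, since the statement concerns all $n\ge 0$ and all even $t\le 2$ and the collapse is needed inside the inductive step, not only at the anchors; without identifying the actual resolution tree and the auxiliary $\kappa$-computations above, the bookkeeping you describe cannot be carried out.
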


The overall structure of the proof of Theorem \ref{Thm:kappaKnts} is a decreasing induction on $t$ for even $t\leq 2$. We will also need to see that $\kappa(K^0_{n,4}) = -2-n$; both this and the base case $t = 2$ of Theorem \ref{Thm:kappaKnts} are proved by induction on $n$. At several points in the proof the knots $K_{n,t}^s$ with $s = 4$ will arise, so we begin with the following lemma relating the values of $\kappa(K_{n,t}^4)$ and $\kappa(K_{n,t}^0)$. The structure of the argument serves as a model for many similar proofs to follow.

 \begin{lem}
If $\kappa(K_{n,t}^0)\le -3$, then $\kappa(K_{n,t}^4)= \kappa(K_{n,t}^0) - 4$. If $\kappa(K_{n,t}^{4})\le -3$, then $\kappa(K_{n,t}^0)= \kappa(K_{n,t}^{4}) +4$. 
\label{Lem:changes}
\end{lem}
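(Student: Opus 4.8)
The plan is to apply the skein long exact sequence of Lemma~\ref{Lem:LES} to a crossing $c$ of the twist box in a diagram $D$ for $K_{n,t}^{4}$, and then to extract the desired relation between $\kappa(K_{n,t}^{0})$ and $\kappa(K_{n,t}^{4})$ from a comparison in low collapsed gradings. First I would identify the two resolutions of $c$: after the Reidemeister moves that unwind the twist box, one of $Res_0(D,c)$, $Res_1(D,c)$ becomes a diagram of $K_{n,t}^{0}$ (the box being eliminated), while the other becomes a strictly simpler link $L$ whose collapsed Khovanov homology, and in particular whose value $\kappa(L)$, is pinned down by the $H^{\pm}$ computation of Remark~\ref{rmk:Hopf} or directly via computer. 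Since these resolutions need not inherit orientations, I would fix arbitrary orientations on them and carry the corresponding corrections through the grading shifts, as Remark~\ref{rmk:orientations} permits.

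Next I would compute the shifts $p$, $q$, and $r=-1-p-q$ of Lemma~\ref{Lem:LES} from the writhes of $D$, $Res_0$, and $Res_1$; the essential output of this bookkeeping is that the shift identifying the $K_{n,t}^{0}$-resolution's contribution inside $\KH(K_{n,t}^{4})$ equals $-4$, which accounts for the ``$\mp 4$'' in the statement. Plugging this into the exact triangle of Lemma~\ref{Lem:LES}, the maps into and out of $\KH(L)$ vanish in all sufficiently negative gradings --- the threshold being governed by the shifted support of $\KH(L)$ --- so that in that stable range the triangle yields an isomorphism $HKh^{v}(K_{n,t}^{4})\cong HKh^{v+4}(K_{n,t}^{0})$. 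The function of the hypothesis --- that one of $\kappa(K_{n,t}^{0})$, $\kappa(K_{n,t}^{4})$ is at most $-3$ --- is exactly to place the relevant bottom class inside this stable range, strictly below the contribution of $\KH(L)$, so that no cancellation between the two resolutions occurs there. Reading the isomorphism in the lowest nonzero degree then gives $\kappa(K_{n,t}^{4})=\kappa(K_{n,t}^{0})-4$ when $\kappa(K_{n,t}^{0})\le -3$ and $\kappa(K_{n,t}^{0})=\kappa(K_{n,t}^{4})+4$ when $\kappa(K_{n,t}^{4})\le -3$, the two assertions being the two ways of reading the same isomorphism.

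The step I expect to be the main obstacle is the precise identification of the resolution $L$ together with the verification that the bound ``$\le -3$'' is sharp: one must determine exactly which collapsed gradings support $\KH(L)$ once the shifts $p$, $q$, $r$ are applied, and then check that a bottom class of $\KH(K_{n,t}^{0})$ (resp.\ of $\KH(K_{n,t}^{4})$) sitting at $-3$ plus the appropriate shift can be neither hit by nor mapped onto any class of $\KH(L)$. The remaining ingredients --- the Reidemeister moves stripping the twist box and the writhe computation producing $p$, $q$, $r$ --- are routine, and, as the text remarks, this argument is the template for the $\kappa$-computations that follow.
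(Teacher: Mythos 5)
There is a genuine gap in your plan: it hinges on a single application of Lemma~\ref{Lem:LES} in which one resolution of a crossing of the twist box of $K_{n,t}^{4}$ is a diagram of $K_{n,t}^{0}$ and the other is a small link $L$ handled by Remark~\ref{rmk:Hopf}. That identification fails. The knots $K_{n,t}^{0}$ and $K_{n,t}^{4}$ differ by four half twists, so no single-crossing resolution can convert one into the other: the planar smoothing of a crossing in the $s$-box changes $s$ by exactly one, giving $K_{n,t}^{s\pm1}$, while the turn-back smoothing, after the remaining twists are undone by Reidemeister~I moves, changes the strand connectivity and yields a Hopf link $H^{\pm}$ (this is precisely what happens in the paper's sequences \eqref{eq:1LES} and \eqref{eq:2LES}), not $K_{n,t}^{0}$. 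Consequently the exact triangle you propose, with a net shift of $-4$ read off from one writhe computation, does not exist.

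The correct argument iterates the skein triangle four times, stepping through the intermediate knots $K_{n,t}^{1}, K_{n,t}^{2}, K_{n,t}^{3}$: for $s$ even the third term is $H^{+}$ (with $\kappa=0$) and the relevant shift is $-5$, while for $s$ odd it is $H^{-}$ (with $\kappa(H^{-})=-4$) and the shift is $+3$, so the net effect over four steps is $-5+3-5+3=-4$. Moreover the hypothesis $\kappa\le -3$ is not a sharpness threshold for one triangle, as you suggest; it is what keeps the bottom class below the (shifted) support of $H^{\pm}$ at \emph{each} of the four stages, and it must be propagated through the intermediate knots --- e.g.\ one checks $\kappa(K_{n,t}^{1})=\kappa(K_{n,t}^{0})-5\le -6$ before the second step, and $\kappa(K_{n,t}^{2})=\kappa(K_{n,t}^{0})-2\le -3$ so that the two-step argument can be run again starting at $s=2$. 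Since your proposal never sees the intermediate knots or the alternation between $H^{+}$ and $H^{-}$, it cannot produce either the $-4$ shift or the role of the hypothesis, and as written it does not prove the lemma.
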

\begin{proof}

We will prove the first assertion of the lemma explicitly. The second follows similarly and we leave the proof to the reader. We claim that when $s$ is even there is a long exact sequence of the form
\begin{equation}\label{eq:1LES}
\xymatrix{\KH(H^+) \ar[rr]^{(-5)} &&
\KH(K_{n,t}^{s+1})
\ar[dl] ^{(5)}\\
& \KH(K_{n,t}^s), \ar[ul]^{(-1)} & }
\end{equation}
and when $s$ is odd there is a long exact sequence of the form
\begin{equation}\label{eq:2LES}
\xymatrix{\KH( H^-) \ar[rr]^{(3)} &&
\KH(K_{n,t}^{s+1})
\ar[dl] ^{(-3)}\\
& \KH(K_{n,t}^s). \ar[ul]^{(-1)} & }
\end{equation}

Indeed, these are the long exact sequences associated to the crossing $c$ indicated in the diagram for $K_{n,t}^s$ of Figure \ref{fig:KnKnt}; we merely must check details. Assume $s$ is even: then by comparing signs of crossings in the diagram before and after resolving $c$, we find that in notation of Lemma \ref{Lem:LES}, we have $p=5$ and $q=-1$. It is also easy to see by inspecting the diagram that $Res_0 \simeq H^+$ and $Res_1\simeq K_{n,t}^{s+1}$, where $\simeq$ denotes isotopy. The sequence \eqref{eq:1LES} follows; sequence \eqref{eq:2LES} is similar.

Now the proof proceeds by appealing to the sequences above four times. Take $s = 0$ in~\eqref{eq:1LES}, and recall that from Remark~\ref{rmk:Hopf} we have $HKh^u(H^+) = 0$ for all $u\leq -1$. Hence for $u\le -1$, 
\[
HKh^{u-5}(K_{n,t}^1)\cong HKh^{u}(K_{n,t}^0).
\]
Using the hypothesis that $\kappa(K_{n,t}^0)\le -3$, this implies
\begin{equation}\label{eq:kappa}
 \kappa (K_{n,t}^1)=\kappa (K_{n,t}^0)-5.
\end{equation}
Now using~\eqref{eq:2LES}~and~Remark~\ref{rmk:Hopf}, we get that for $u\le -5$
\[
HKh^{u+3}(K_{n,t}^{2})\cong HKh^{u}(K_{n,t}^{1}).
\]
Hence $$\kappa(K_{n,t}^2)= \kappa(K_{n,t}^1)+ 3,$$ since the hypothesis implies $\kappa(K_{n,t}^1)\le -8$ (in fact, we need only $\kappa(K_{n,t}^1)\leq -6$). Combining this with ~\eqref{eq:kappa} we see that
\[
\kappa(K_{n,t}^2)=\kappa(K_{n,t}^0)-2.
\]
Note that in particular $\kappa(K_{n,t}^2)\le -3$, so the same argument can be repeated starting with $s = 2$, and the result follows. 
\end{proof}

Now we begin our inductive calculation of $\kappa(K_{n,t}^0)$. The first, special, case is $t = 4$, and here there is a simplification: we have that $K_{n,4}^0$ is isotopic to the knot $K_n$ on the left of Figure \ref{fig:KnKnt}. The isotopy is indicated in Figure \ref{Isotopy}. 

\begin{lem}\label{Lem:kappaKn} $\kappa(K_n) = -2-n$.
\end{lem}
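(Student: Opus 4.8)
The plan is to compute $\kappa(K_n)$ directly by induction on $n\ge 0$, using the long exact sequence of Lemma~\ref{Lem:LES} applied to a crossing in the twist region labeled $n$ in the diagram on the left of Figure~\ref{fig:KnKnt}. First I would establish the base case $\kappa(K_0) = -2$ (and likely $\kappa(K_1) = -3$ as well, since the inductive step may reach back two levels): since $K_0$, and perhaps $K_1$, is a specific knot of modest complexity, I would compute $\kappa$ ``via computer'' using the Fast KH routines cited in the paper, exactly as the authors announce they will do for small examples. It is worth checking whether $K_0$ is recognizable --- it may well be the unknot or a small torus knot --- in which case $\kappa$ can be read off classically, but in any event the base case is a finite verification.

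For the inductive step, I would resolve one crossing $c$ in the $n$-twist box of $K_n$, chosen so that $Res_0$ and $Res_1$ are identifiable: one resolution should reduce the twisting to give $K_{n-1}$ (after an isotopy that absorbs the remaining twists), and the other resolution should simplify to a link whose $\kappa$ is already known --- almost certainly one of the Hopf links $H^{\pm}$ from Remark~\ref{rmk:Hopf}, or the unknot/unlink. I would then compute the grading shifts $p, q, r$ of Lemma~\ref{Lem:LES} by comparing writhes of the diagrams before and after resolution (being careful about orientations on $Res_0$, $Res_1$ as in Remark~\ref{rmk:orientations}, and choosing arbitrary orientations where the resolution does not inherit one). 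With the shifts in hand, the long exact sequence gives an isomorphism $HKh^{u+(\text{shift})}(K_{n-1}) \cong HKh^{u}(K_n)$ in a range of sufficiently negative gradings $u$, because the contribution of the Hopf-link (or unknot) summand vanishes below its own $\kappa$; this is precisely the mechanism used in the proof of Lemma~\ref{Lem:changes}. Feeding in the inductive hypothesis $\kappa(K_{n-1}) = -2-(n-1) = -1-n$ and checking that $-1-n$ is below the threshold where the other summand could interfere, I conclude $\kappa(K_n) = -2-n$.

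The main obstacle I anticipate is bookkeeping rather than conceptual: correctly identifying the two resolutions $Res_0(K_n, c)$ and $Res_1(K_n, c)$ up to isotopy from the figure, and then getting the grading shifts $p,q,r$ exactly right --- sign errors here would propagate through the induction. A secondary subtlety is ensuring the ``range'' of gradings in which the long exact sequence yields the desired isomorphism is large enough to pin down $\kappa(K_n)$: one must verify that the known value $\kappa(H^{\pm})$ (namely $0$ or $-4$), after the relevant shift, lies at or above $-2-n$ for all $n\ge 0$, so that $HKh^{-2-n}(K_n)$ is governed entirely by $HKh^{*}(K_{n-1})$. If the single-crossing resolution does not cleanly produce $K_{n-1}$, the fallback is to resolve two adjacent crossings and chase a pair of long exact sequences, combining the shifts additively --- again following the template of Lemma~\ref{Lem:changes}, where the sequences are applied several times in succession.
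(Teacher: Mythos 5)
Your proposal follows the paper's proof essentially verbatim: induction on $n$ with the base case $\kappa(K_0)=-2$ verified by computer (the paper notes $K_0$ is the mirror of $8_{20}$), and the inductive step via the Khovanov skein long exact sequence at a crossing in the twist region (after a flype), with grading shifts $p=1$, $q=-1$, relating $K_{n-1}$, $K_n$, and a fixed side link independent of $n$. The only discrepancy is that this side resolution is a twisted Whitehead link with $\kappa=0$ (computed by computer or a further exact sequence), not a Hopf link or unknot; since its $\kappa$ lies above the range $u\le -1$ where one needs the isomorphism $HKh^{u-1}(K_n)\cong HKh^{u}(K_{n-1})$, your argument closes exactly as you describe.
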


\begin{proof} We proceed by induction on $n$. The base case $n=0$ follows by the observation that $K_0$ is the mirror of the knot $8_{20}$ in Rolfsen's table, whose maximum Thurston-Bennequin number and also $\kappa$-invariant are equal to $-2$ (see \cite{Ng05} and \cite{Ng01}, or one can check by computer).
\begin{figure}[t]
    \centering
   \includegraphics[scale=.55]{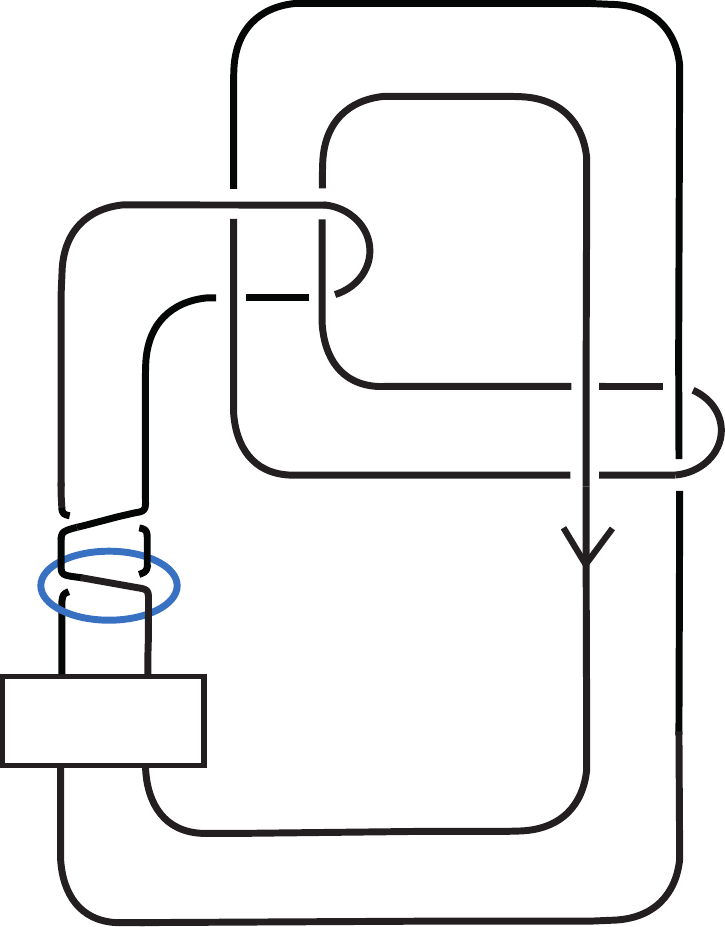}
\put(-110,29){\small{$n-1$}}
    \caption{A diagram of the knot $K_{n-1}$.}
    \label{fig:Kn-1}
\end{figure}
Observe that a sequence of isotopies (specifically, flypes) brings the diagram of $K_{n-1}$ to the one in Figure~\ref{fig:Kn-1}, and let $c$ be the indicated crossing of this diagram of $K_{n-1}$. In the associated long exact sequence~\eqref{LES}, we find $p=1$, $q=-1$ and $Res_1\simeq K_n$. 
The 0 resolution $Res_0$ is isotopic to a twisted Whitehead link, independent of $n$: a bit of additional work with \eqref{LES} or a computer calculation shows that $\kappa(Res_0)=0$. Therefore, when $u\le -1$ we have that, $$HKh^{u-1}(K_{n})\cong HKh^u(K_{n-1}).$$ 
Using the induction hypothesis we infer $\kappa(K_n) = \kappa(K_{n-1}) - 1$, from which the lemma follows.
\end{proof}

\begin{figure}[t]
    \centering
   \includegraphics[scale=1]{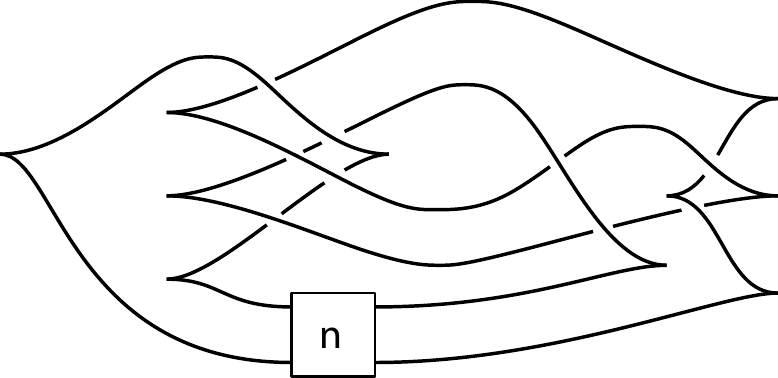}
    \caption{A Legendrian diagram of $K_n$. Note that $tb=-2-n$.}
    \label{fig:Legendrian}
\end{figure}

We pause to observe that the preceding lemma easily gives Lemma \ref{Lem:tbKn} calculating the maximum Thurston-Bennequin number of $K_n$:

\begin{proof}[Proof of Lemma \ref{Lem:tbKn}] The diagram of Figure \ref{fig:Kn-1} (replacing $n-1$ by $n$) can easily be turned into the Legendrian diagram of Figure~\ref{fig:Legendrian}. Since the Legendrian in that diagram has Thurston-Bennequin number $-2-n$,  the result  follows from Lemma \ref{Lem:kappaKn} and Theorem~\ref{Thm:Ngbound}. 
\end{proof}

We now turn our attention to calculating $\kappa(K_{n,2}^0)$, for which the argument is a bit more involved. First we leave it as an exercise for the reader to check that Figure \ref{fig:Kn2} is a diagram for $K_{n,2}^0$ (the reader who would like a hint can consider the related isotopy in Figure \ref{Isotopy2}). Our argument proceeds by applying the long exact sequence \eqref{LES} to the crossing indicated in Figure~\ref{fig:Kn2}.  
\begin{figure}[b]
    \centering
   \includegraphics[scale=.55]{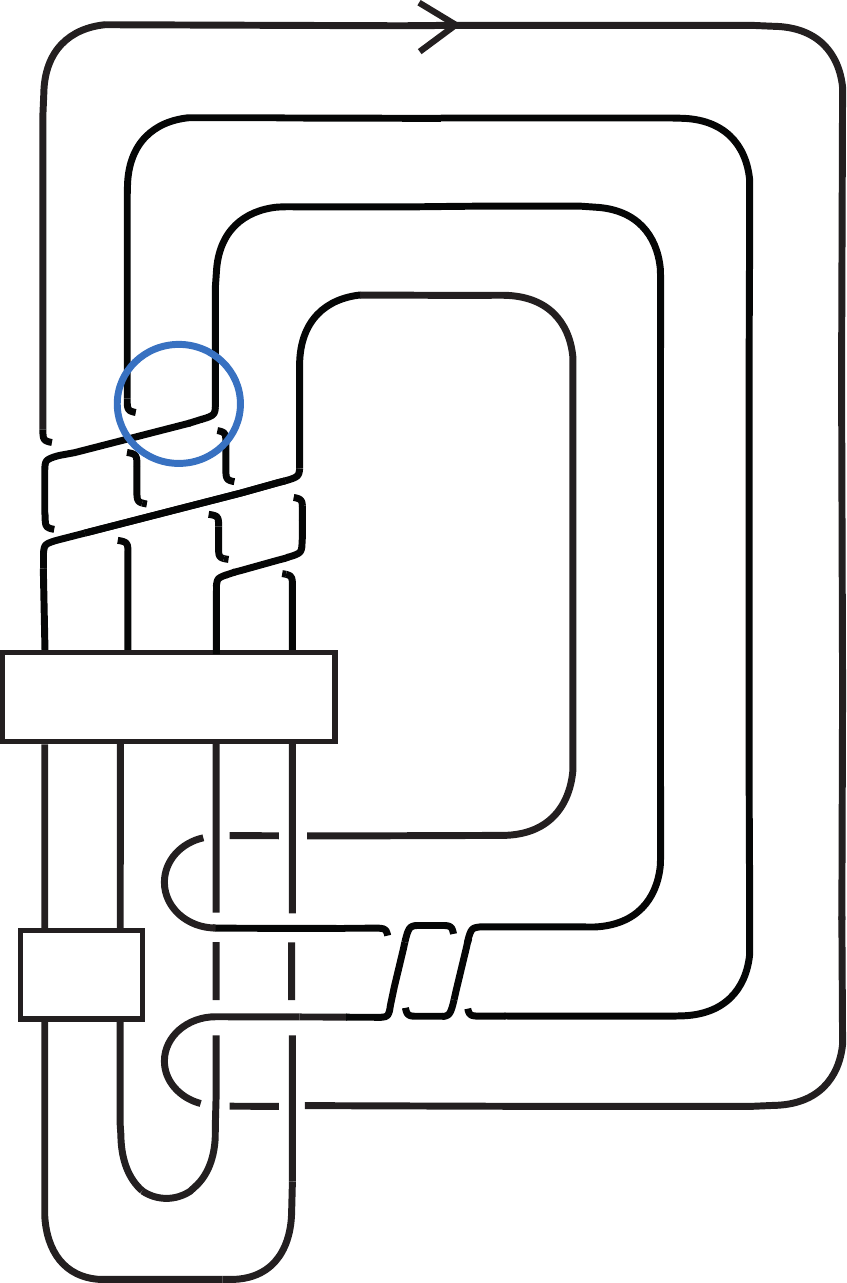}
\put(-120,90){$n+1$}
\put(-130,46){$-4$}
    \caption{A diagram of the knot $K_{n,2}^0$.}
    \label{fig:Kn2}
\end{figure}
Now, the 1 resolution of that crossing yields a link isotopic to the negative torus link $T(-4,2)$, independent of $n$. On the other hand, $Res_0\simeq R_n$, where $R_n$ is depicted in Figure~\ref{fig:Dn0}. 
\begin{figure}[t]
    \centering
   \includegraphics[scale=.55]{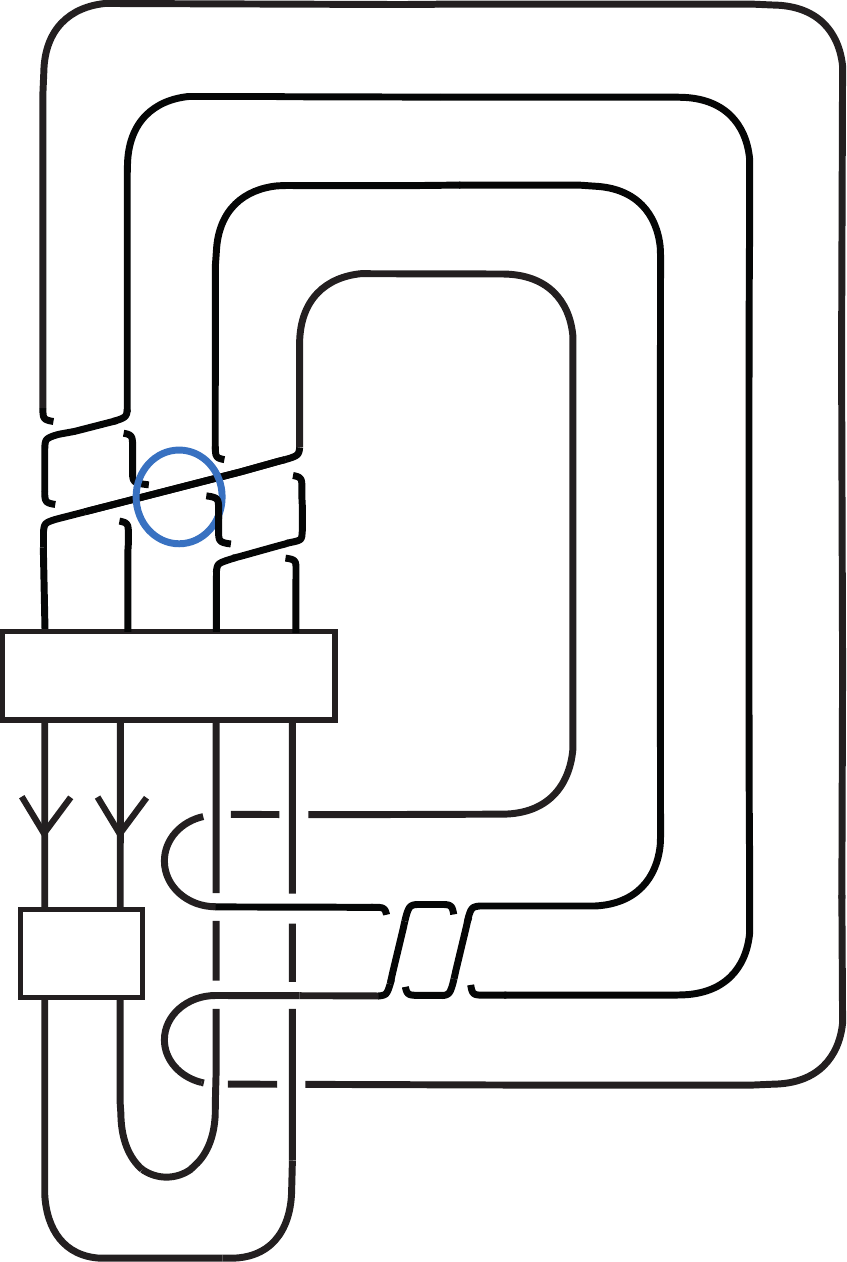}
\put(-120,90){$n+1$}
\put(-129,45){$-4$}
    \caption{A diagram of the knot $R_n$.}
    \label{fig:Dn0}
\end{figure}

\begin{figure}[t]
    \centering
   \includegraphics[scale=.55]{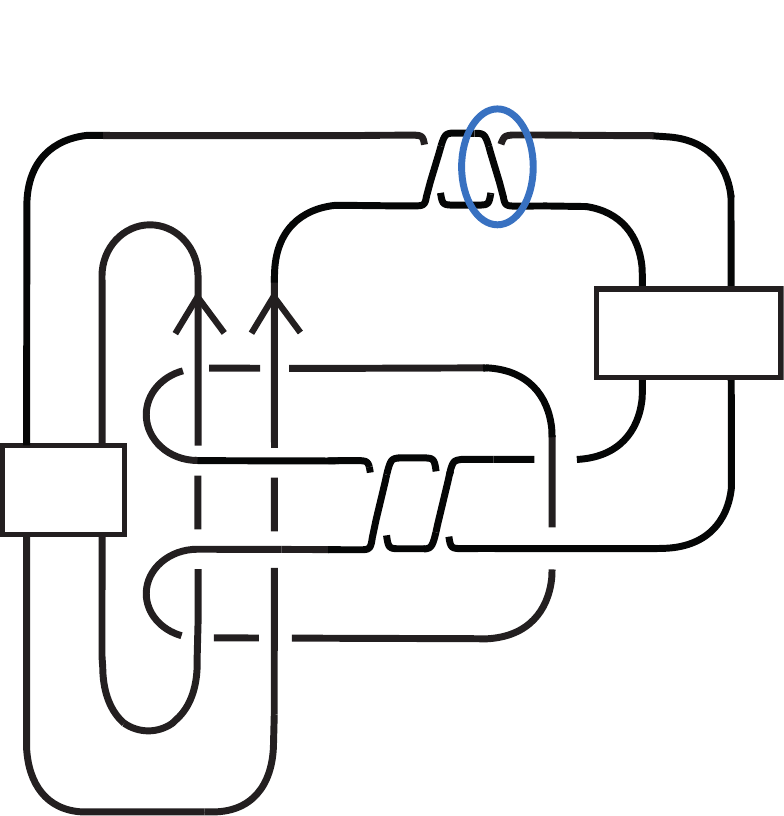}
\put(-122,49){\small{$-2$}}
\put(-27,73){\small{$n+1$}}
    \caption{A diagram of the link $Q_n$.}
    \label{fig:Qn}
\end{figure}
Taking resolutions of the crossing indicated in Figure \ref{fig:Dn0}, we find that the 1 resolution gives a link $Q_n$ as in Figure \ref{fig:Qn}, while the 0 resolution is isotopic to $K^4_{n-1,2}$ (this isotopy is indicated in Figure \ref{Isotopy2}). Our strategy for calculating $\kappa(K^0_{n,2})$ is summarized in the resolution tree shown in Figure \ref{fig:Resolutions3}.
\begin{figure}[tb]
    \centering
   \includegraphics[width=4in]{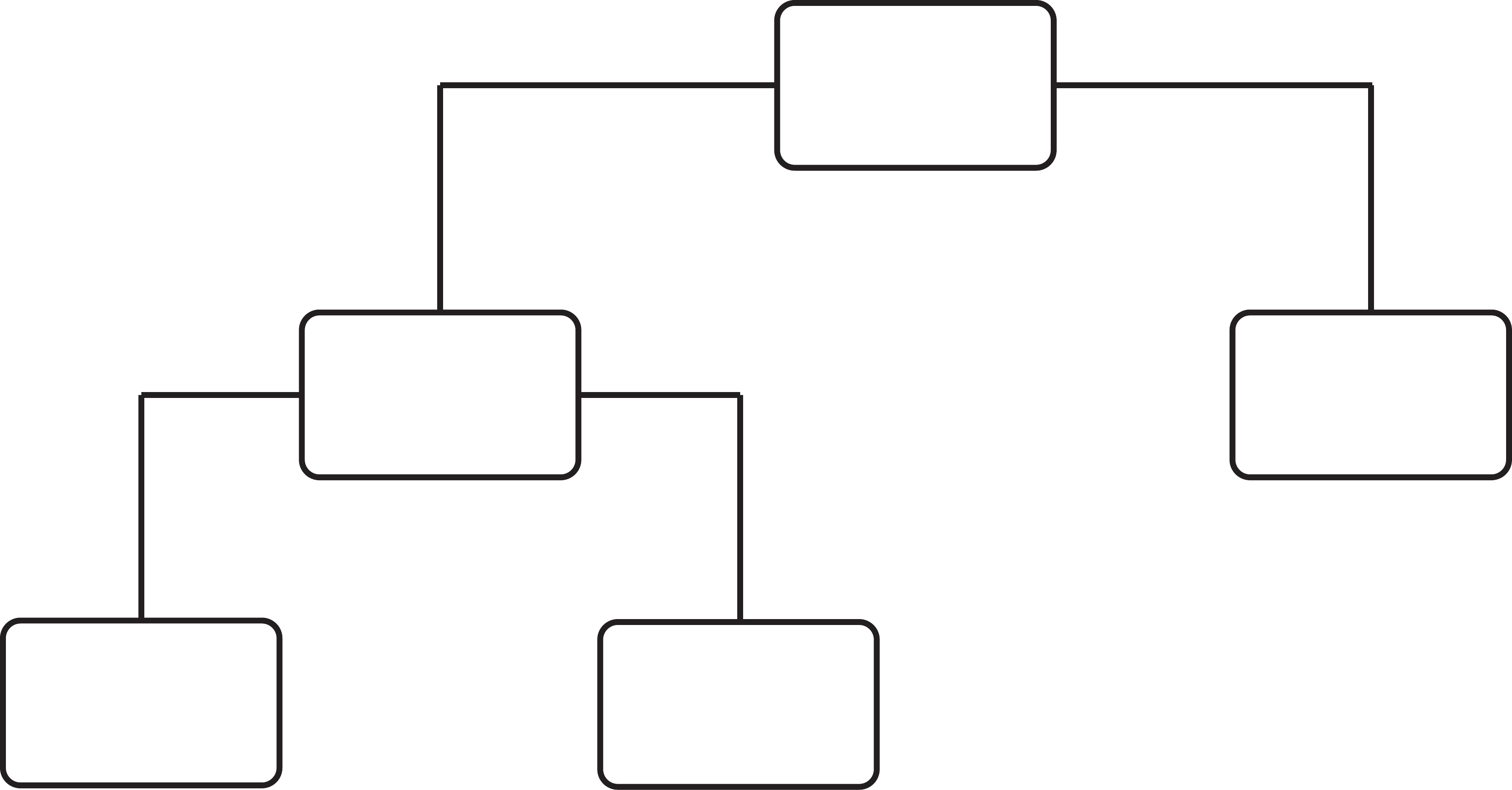}
\put(-125,132){$K_{n,2}^0$}
\put(-175,138){$0$}
\put(-60,138){$1$}
\put(-212,72){$R_n$}
\put(-47,72){$T(-4,2)$}
\put(-275,13){$K^4_{n-1,2}$}
\put(-156,13){$Q_n$}
\put(-250,78){$0$}
\put(-165,78){$1$}
    \caption{Resolution tree yielding the calculation of $\kappa(K^0_{n,2})$.}
    \label{fig:Resolutions3}
\end{figure}

We begin at the bottom of the resolution tree.

\begin{lem}
$\kappa(Q_n)=-8-n$. 
\label{Lem:kappaQn}
\end{lem}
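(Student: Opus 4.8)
The plan is to compute $\kappa(Q_n)$ by induction on $n$, following exactly the template of the earlier lemmas: pick a convenient crossing in the diagram of $Q_n$ from Figure~\ref{fig:Qn}, take its two resolutions, identify them up to isotopy, and feed the result into the long exact sequence \eqref{LES} of Lemma~\ref{Lem:LES}. The base case $n = 0$ should be handled directly, most likely ``via computer'' using the routines at \cite{KAT} (or by recognizing $Q_0$ as a small named link whose collapsed Khovanov homology is known), establishing $\kappa(Q_0) = -8$.

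For the inductive step I would resolve the crossing in the twist region labeled $n+1$ in Figure~\ref{fig:Qn}. One resolution should reduce the $n+1$ box to an $n$ box, producing a link isotopic to $Q_{n-1}$; the other resolution should collapse that twist region and yield a simpler, $n$-independent link $L_0$ (plausibly a Hopf link $H^\pm$, a torus link $T(\pm 4,2)$, or one of the other small links already appearing in the paper), whose $\kappa$ can be quoted from Remark~\ref{rmk:Hopf}, from the $T(-4,2)$ computation already used, or computed via computer. One then reads off from Lemma~\ref{Lem:LES} the grading shifts $p, q, r$ by comparing writhes before and after resolving, checking signs of crossings carefully as in the proof of Lemma~\ref{Lem:changes}; as noted in Remark~\ref{rmk:orientations}, the answer is independent of the auxiliary orientations chosen on disoriented components. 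The shifts should be arranged so that in the relevant range of gradings $u$ (well below $\kappa$ of the simple link $L_0$), the map from $\KH$ of the simple resolution is zero, forcing an isomorphism $HKh^{u + (\text{shift})}(Q_n) \cong HKh^{u}(Q_{n-1})$ and hence $\kappa(Q_n) = \kappa(Q_{n-1}) - 1$ (the shift of $-1$ being what is needed to land on $-8 - n$ from $-8-(n-1)$). Combined with the base case this gives $\kappa(Q_n) = -8 - n$.

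The main obstacle is the bookkeeping rather than any conceptual difficulty: one must correctly identify both resolutions up to isotopy (this is where a small isotopy figure analogous to Figures~\ref{Isotopy}, \ref{Isotopy2} would be included), compute the writhe-based shifts with the right signs, and — crucially — verify that the inequality $\kappa(Q_{n-1}) \le -3$ (or whatever threshold the vanishing of $\KH(L_0)$ in low gradings requires, e.g. $\kappa(Q_{n-1})$ small enough that $u$ can be taken below $\kappa(L_0)$) is maintained at each stage so that the long exact sequence genuinely yields an isomorphism and not merely an exact triangle with possible contributions from $L_0$. Since $\kappa(Q_{n-1}) = -8-(n-1) \le -7$ for all $n \ge 0$, this threshold condition is comfortably satisfied, so the induction closes cleanly; the only real care needed is that the base case link be correctly recognized and its $\kappa$ value verified.
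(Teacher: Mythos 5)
Your plan matches the paper's proof essentially verbatim: the paper inducts on $n$ with the base case $Q_0$ checked via computer, resolves the crossing indicated in Figure~\ref{fig:Qn} so that $Res_0\simeq Q_{n-1}$ while $Res_1$ is an $n$-independent 3-component link with $\kappa(Res_1)=-7$ (computed via computer, with Remark~\ref{rmk:orientations} handling orientations), and then reads off $HKh^{r-1}(Q_n)\cong HKh^{r}(Q_{n-1})$ for $r\le -8$ from Lemma~\ref{Lem:LES} (with $p=-1$, $q=1$), giving $\kappa(Q_n)=\kappa(Q_{n-1})-1$. Your threshold check that the induction stays well below the vanishing range of the simple resolution is exactly the point the paper uses, so the proposal is correct and takes the same route.
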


\begin{proof}
We will prove this by induction on $n$, and check the case $n=0$ via computer. 

Apply the long exact sequence of \eqref{LES} to the crossing indicated in Figure~\ref{fig:Qn}: we find that $p=-1$, $q=1$ and $Res_0 \simeq Q_{n-1}$. The 1 resolution gives a 3-component link independent of $n$, and we check via computer that $\kappa(Res_1)=-7$. See Remark~\ref{rmk:orientations}. 

Hence when $r\le-8$ we get that $$HKh^{r-1}(Q_{n})\cong HKh^r(Q_{n-1}),$$ which, with the induction hypothesis, gives us that $$\kappa(Q_n)=\kappa(Q_{n-1})-1=-8-n.$$ 
\end{proof}

The other terms in the resolution tree are either straightforward (for $T(-4,2)$) or rely on the induction hypothesis (for $K^4_{n-1,2}$, and therefore $R_n$ as well). We therefore give the remainder of the proof all at once.

\begin{lem}
$\kappa(K_{n,2}^0)=-2n-3$. 
\label{Lem:kappaKn20}
\end{lem}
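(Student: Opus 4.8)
The plan is to run the induction on $n$ using the resolution tree of Figure~\ref{fig:Resolutions3}, working from the bottom up. The base case $n = 0$ is handled via computer, so assume the statement for $n-1$, i.e. $\kappa(K^0_{n-1,2}) = -2n-1$. First I would deal with the branch going through $K^4_{n-1,2}$: by Lemma~\ref{Lem:changes} (applicable since the induction hypothesis gives $\kappa(K^0_{n-1,2}) = -2n-1 \le -3$), we get $\kappa(K^4_{n-1,2}) = \kappa(K^0_{n-1,2}) - 4 = -2n-5$. Next, resolve the indicated crossing of $R_n$: one resolution is $K^4_{n-1,2}$ (with $\kappa = -2n-5$) and the other is $Q_n$, for which Lemma~\ref{Lem:kappaQn} gives $\kappa(Q_n) = -8-n$. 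Reading off the grading shifts $p,q,r$ for this crossing from the diagram in Figure~\ref{fig:Dn0}, I would feed these two values into the long exact sequence~\eqref{LES} to conclude $\kappa(R_n)$ equals the appropriate value — I expect $\kappa(R_n) = -2n-5$ or close to it, with the comparison $-2n-5$ versus $-8-n$ determining which term dominates in low degree (for $n \ge 3$ these differ, and one must track the edge cases carefully, as in the proof of Lemma~\ref{Lem:changes}).

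Then I would handle the top of the tree. Resolve the crossing of $K^0_{n,2}$ indicated in Figure~\ref{fig:Kn2}: the $0$-resolution is $R_n$ and the $1$-resolution is the negative torus link $T(-4,2)$. The value $\kappa(T(-4,2))$ is a fixed, small, computable number (this is the "straightforward" part — $T(-4,2)$ is a well-understood link, computable by hand via~\eqref{LES} applied to one of its crossings, or by computer), and I would record it. With the grading shifts $p = 1$, $q = -1$ (or whatever inspection of Figure~\ref{fig:Kn2} yields) plugged into~\eqref{LES}, together with $\kappa(R_n)$ from the previous paragraph and $\kappa(T(-4,2))$, the minimal nonzero grading of $HKh^*(K^0_{n,2})$ is forced. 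The target value is $-2n-3$, so I need $\kappa(R_n)$ and the shifted $\kappa(T(-4,2))$ to be arranged so that the dominant (most negative) contribution lands at $-2n-3$, while the other contribution is strictly more negative — and when both could contribute, the long exact sequence still pins down the answer provided the overlapping gradings behave, exactly the kind of bookkeeping carried out in Lemma~\ref{Lem:changes}.

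Throughout I would invoke Remark~\ref{rmk:orientations} to justify working with whatever orientations make the resolutions convenient, and Remark~\ref{rmk:Hopf} if any Hopf link appears among intermediate resolutions. The main obstacle I anticipate is not any single deep idea but the careful tracking of grading shifts and the degenerate cases: at each application of~\eqref{LES} one must verify that the two incoming $\kappa$ values are separated enough (by more than the relevant shift) that the map $(r)$ is forced to be zero or an isomorphism in the critical degree, and for small $n$ — particularly $n = 1$ and $n = 2$ — the inequalities guaranteeing this separation may fail, requiring either a direct computer check or a more delicate argument. The cleanest write-up will state the computation of $\kappa(T(-4,2))$ explicitly, then present the two applications of~\eqref{LES} (for $R_n$, then for $K^0_{n,2}$) with all shifts spelled out, mirroring the template established in the proof of Lemma~\ref{Lem:changes}, and finally note that the base cases $n \le 2$ (or however many are needed) are confirmed via computer.
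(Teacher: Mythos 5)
Your plan follows the paper's proof essentially verbatim: the same resolution tree with base case $n=0$ by computer, Lemma~\ref{Lem:changes} giving $\kappa(K^4_{n-1,2})=-2n-5$, and two applications of~\eqref{LES} --- at the crossing of $R_n$ (shifts $p=-1$, $q=5$) against $\kappa(Q_n)=-8-n$, which yields $\kappa(R_n)=\kappa(K^4_{n-1,2})-5=-2n-10$, and then at the crossing of $K^0_{n,2}$ (shifts $p=-1$, $q=-7$) against $\kappa(T(-4,2))=0$, which yields $\kappa(K^0_{n,2})=\kappa(R_n)+7=-2n-3$. The only corrections to your bookkeeping: the non-dominant term must end up \emph{less} negative (after shifting) than the critical degree, not more negative, and the needed separation inequalities ($\kappa(K^4_{n-1,2})\le -5-n$ and $\kappa(R_n)\le -9$) hold for every $n\ge 1$, so no additional small-$n$ computer checks are required beyond $n=0$.
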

\begin{proof}
As before we check the base case $n=0$ via computer. 
In the long exact sequence associated to the crossing indicated in Figure~\ref{fig:Kn2}, we find that the writhe differences are $p=-1$ and $q=-7$.  For the torus link $T(-4,2)$ it is not hard to check directly (or one can verify by computer) that $\kappa = 0$, and therefore the long exact sequence shows 
 \[
HKh^u(K_{n,2}^0)\cong HKh^{u-7}(R_n)
\] 
for $u\le -2$.  In particular, so long as $\kappa(R_n)\leq -9$, we have 
\begin{equation}\label{eq:kappa1}
\kappa(K^0_{n,2}) = \kappa(R_n) + 7.
\end{equation}
Now consider the long exact sequence arising from the crossing of $R_n$ indicated in Figure \ref{fig:Dn0}. This time we have $p=-1$ and $q=5$. From the fact that $\kappa(Q_n) = -8-n$, we know that for $u<-8-n$, 
\[
HKh^{u-1}(R_n)\cong HKh^{u+4}(K_{n-1,2}^4). 
\]
It follows that so long as $\kappa(K^4_{n-1,2})\leq -5-n$ we get 
\begin{equation}\label{eq:kappa2}
\kappa(R_n) = \kappa(K^4_{n,2}) -5.
\end{equation}

We recall from Lemma \ref{Lem:changes} that if $\kappa(K_{n,2}^0)\le-3$ then $\kappa(K_{n,2}^4)=\kappa(K_{n,2}^0)-4$, and with this we can give the induction.

Suppose that the lemma is proved for $K^0_{n-1,2}$ for some $n\geq 1$, so that $\kappa(K^0_{n-1,2}) = -2n-1\leq -3$. Then from Lemma \ref{Lem:changes} we have $\kappa(K^4_{n-1,2}) = -2n-5$, and in particular this says $\kappa(K^4_{n-1,2}) \leq -5-n$. Hence from \eqref{eq:kappa2} we have $\kappa(R_n) = -2n-10$.

Then in particular we have $\kappa(R_n)\leq -9$, so from \eqref{eq:kappa1} we get $\kappa(K^0_{n,2}) =  \kappa(R_n) + 7 = -2n-3$ as desired.
\end{proof}

\begin{figure}[h]
    \centering
   \includegraphics[scale=.52]{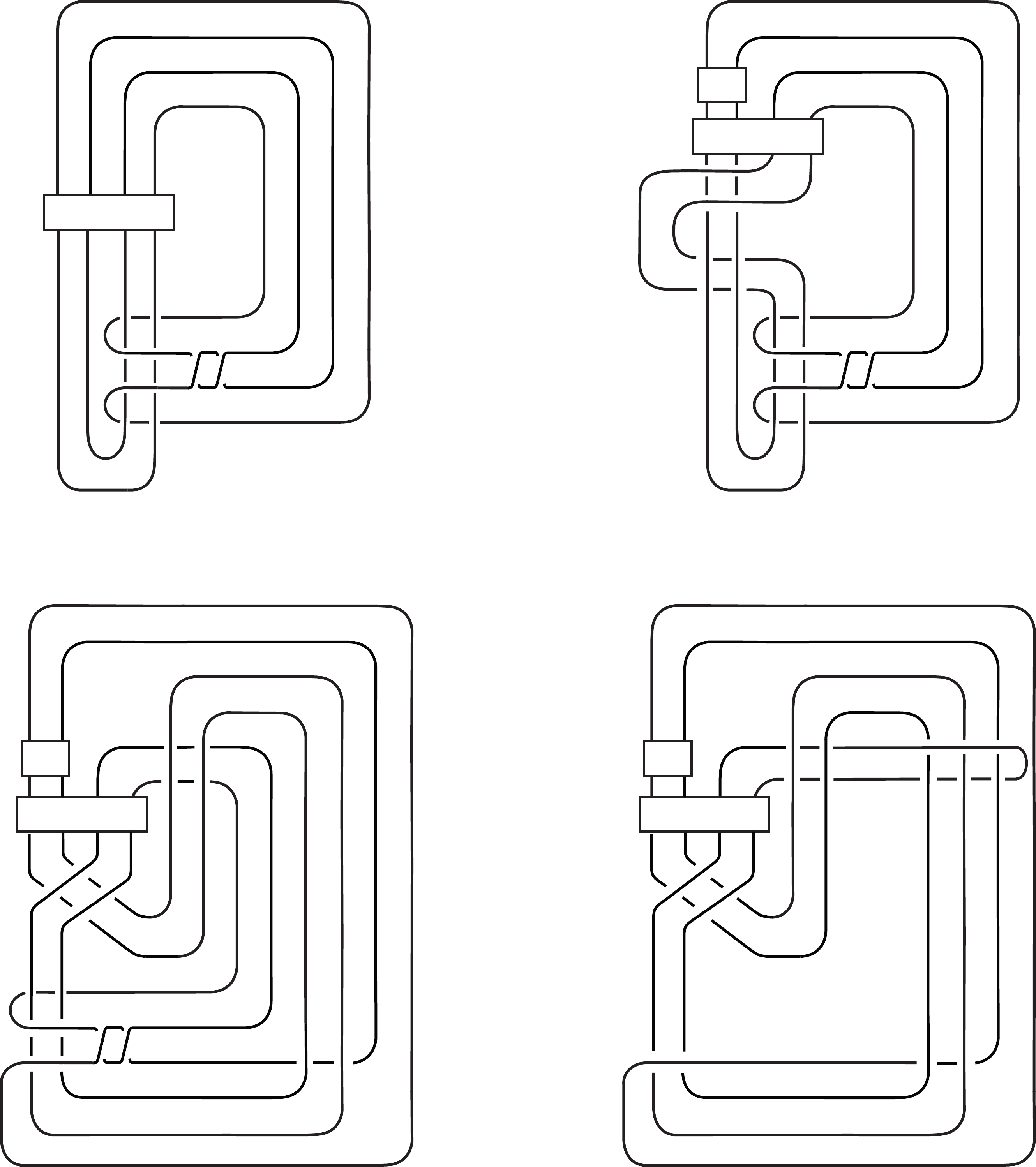}
\put(-400,245){(a)}
\put(-170,245){(b)}
\put(-400,-20){(c)}
\put(-170,-20){(d)}
\put(-370,365){$n+1$}
\put(-120,394){$n-1$}
\put(-125,414){$4$}
\put(-380,132){$n-1$}
\put(-385,153.5){$2$}
\put(-140,132){$n-1$}
\put(-145,153.5){$2$}
    \caption{The series of isotopies needed to go from the 0 resolution of $R_n$ to $K_{n-1,2}^4$. We leave it to the reader to check that the diagram obtained from the 0 resolution of the marked crossing in Figure~\ref{fig:Dn0} is isotopic to the first picture (top left), also that the last picture (bottom right) is isotopic to $K_{n-1,2}^4$.}
    \label{Isotopy2}
\end{figure}

We can now give the proof of Theorem \ref{Thm:kappaKnts}. We use the diagram for $K_{n,t}^0$ in Figure \ref{fig:Knt}, and note that the 1 resolution of the crossing specified in that figure changes $K_{n,t}^0$ into a knot isotopic to $K_{n,t-4}^4$. The  0 resolution gives us the link in Figure~\ref{fig:KntH}, denoted $K_{n,t}^H$. Resolving the crossing specified in Figure~\ref{fig:KntH} will either give $K_{n,t-2}^0$ or $K_{n,t-2}^4$. Figure~\ref{fig:Resolutions1} illustrates this resolution tree.  
\begin{figure}[tb]
    \centering
   \includegraphics[width=4in]{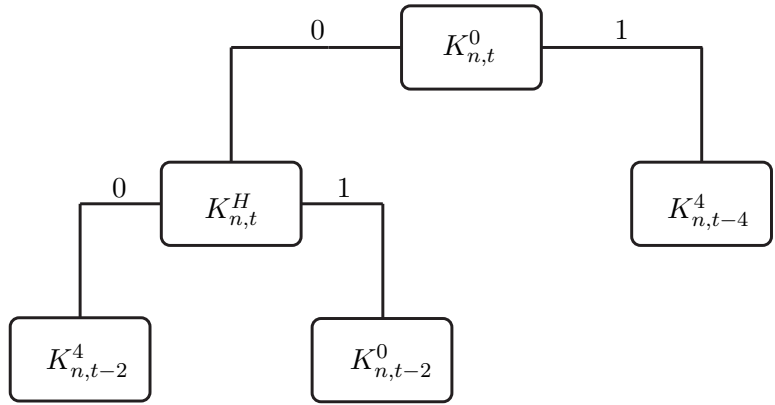}
\put(-125,132){$K_{n,t}^0$}
\put(-175,138){$0$}
\put(-60,138){$1$}
\put(-215,70){$K_{n,t}^H$}
\put(-40,70){$K_{n,t-4}^4$}
\put(-275,13){$K_{n,t-2}^4$}
\put(-160,13){$K_{n,t-2}^0$}
\put(-250,78){$0$}
\put(-165,78){$1$}
    \caption{The series of resolutions done to $K_{n,t}^0$ in the proof of Theorem~\ref{Thm:tbKnts}. This allows calculation of $\kappa(K_{n,t-4}^4)$ from $\kappa$ of $K_{n,t}^0$, $K_{n,t-2}^0$, and $K_{n,t-2}^4$.}
    \label{fig:Resolutions1}
\end{figure}

\begin{figure}[t]
    \centering
   \includegraphics[scale=.55]{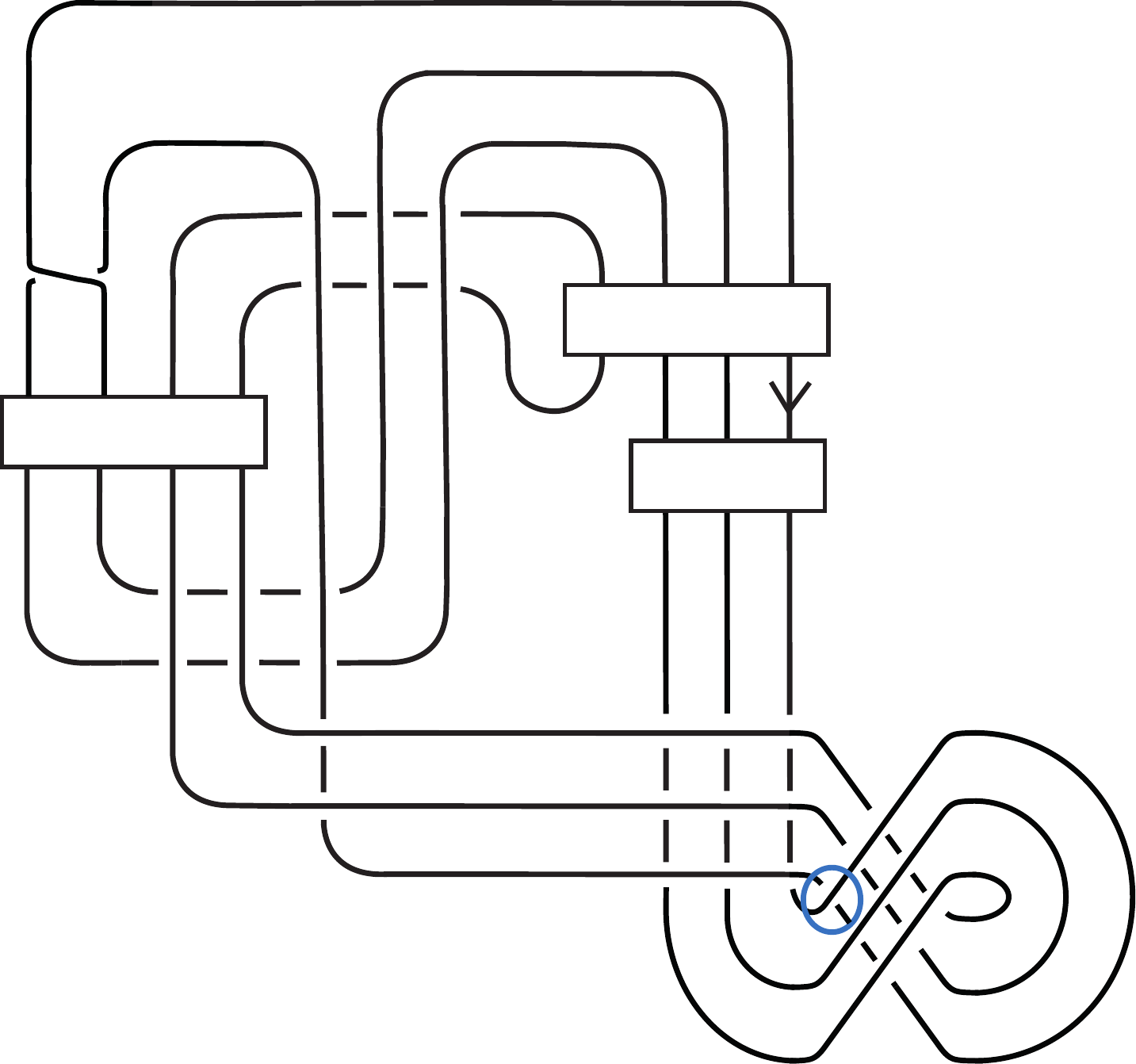}
\put(-203,124){$n$}
\put(-94,114){$t-4$}
\put(-99,145){$-2$}
    \caption{A diagram of the knot $K_{n,t}^0$.}
    \label{fig:Knt}
\end{figure}
\begin{figure}[t]
    \centering
   \includegraphics[scale=.55]{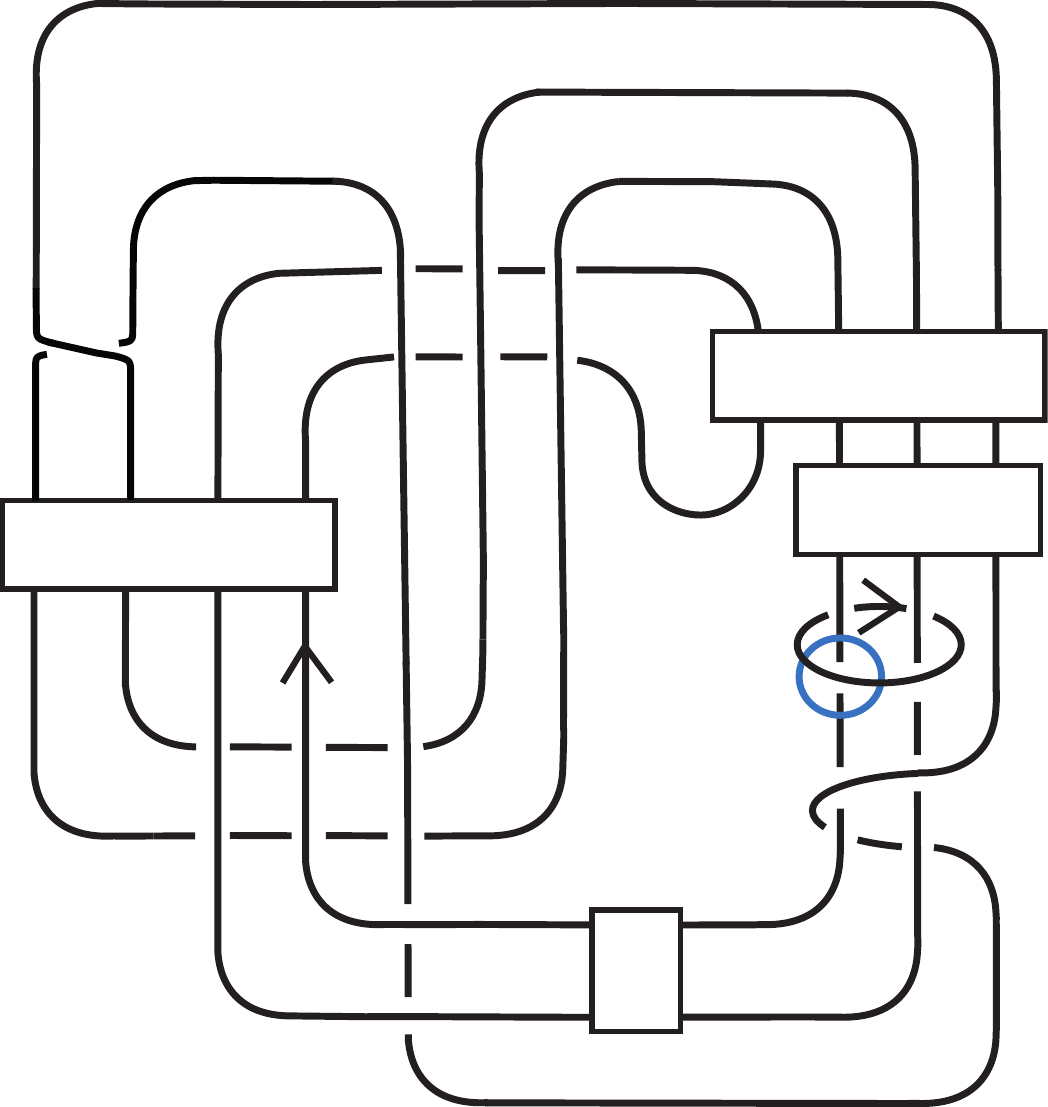}
\put(-143,86){$n$}
\put(-31,91){$t-4$}
\put(-37,113){$-2$}
\put(-68,18){$4$}
    \caption{A diagram of the knot $K_{n,t}^H$.}
    \label{fig:KntH}
\end{figure}


\begin{proof}[Proof of Theorem \ref{Thm:kappaKnts}]
We wish to prove that $\kappa(K_{n,t}^0)= -3-2n$, for all even integers $t$ with $t\le 2$. To do so, we use strong decreasing induction on $t$. The base case, $\kappa(K_{n,2}^0)$, is checked in  Lemma~\ref{Lem:kappaKn20}. Using the observation that $K_n \simeq K_{n,4}^0$, Lemma~\ref{Lem:kappaKn} shows that $\kappa(K_{n,4}^0)= -2-n \ge -3-2n$. 

For the inductive step, assume the result for $K_{n,t}^0$ and $K_{n,t-2}^0$, for some $t\le 4$, and we will prove it for $K_{n,t-4}^0$ (there will be a minor modification in the case $t=4$). In the exact sequence associated to the crossing of $K_{n,t}^0$ in Figure \ref{fig:Knt}, we have that $p=1$ and $q=-1$; as noted before we have $Res_1\simeq K_{n,t-4}^4$ and $Res_0\simeq K_{n,t}^H$. Then by the induction hypothesis we have $\kappa(K_{n,t}^0) = -2n-3$, and therefore for $u\le-2n-4$ we have
\begin{equation}\label{eq:KntH}
HKh^{u-1}(K_{n,t}^H)\cong HKh^{u-2}(K_{n,t-4}^4).
\end{equation}
Observe that in the case $t = 4$ we have instead $\kappa(K^0_{n,4}) = -n-2\geq -2n-3$, which suffices to infer \eqref{eq:KntH} for the same range of $u$, and more, in this case as well.\\

\noindent {\bf Claim.} $\kappa(K_{n,t}^H)= -2n-6.$
\begin{proof}[Proof of Claim.]
This time the exact sequence associated to the crossing indicated in Figure~\ref{fig:KntH} has $p=q=-1$. We have $Res_0\simeq K_{n,t-2}^4$ and $Res_1\simeq K_{n,t-2}^0$, and therefore the inductive hypothesis and the sequence in~\eqref{LES} give us that for $u\le-2n-4$,
\[
HKh^{u-1}(K_{n,t}^H)\cong HKh^{u-2}(K_{n,t-2}^4).
\]
By the inductive hypothesis and Lemma~\ref{Lem:changes} we have that $$\kappa(K_{n,t-2}^4)= -2n-7,$$
and the claim follows.
\end{proof}
The claim combined with Equation~\eqref{eq:KntH} imply that $\kappa (K_{n,t-4}^4)= -2n-7$. Lemma~\ref{Lem:changes} gives us that $\kappa(K_{n,t-4}^0)= -2n-3$, as desired. This concludes the proof of Theorem \ref{Thm:kappaKnts}, and hence also that of Theorem \ref{Thm:tbKnts}.
\end{proof}
\begin{figure}[b]
    \centering
   \includegraphics[width=4in]{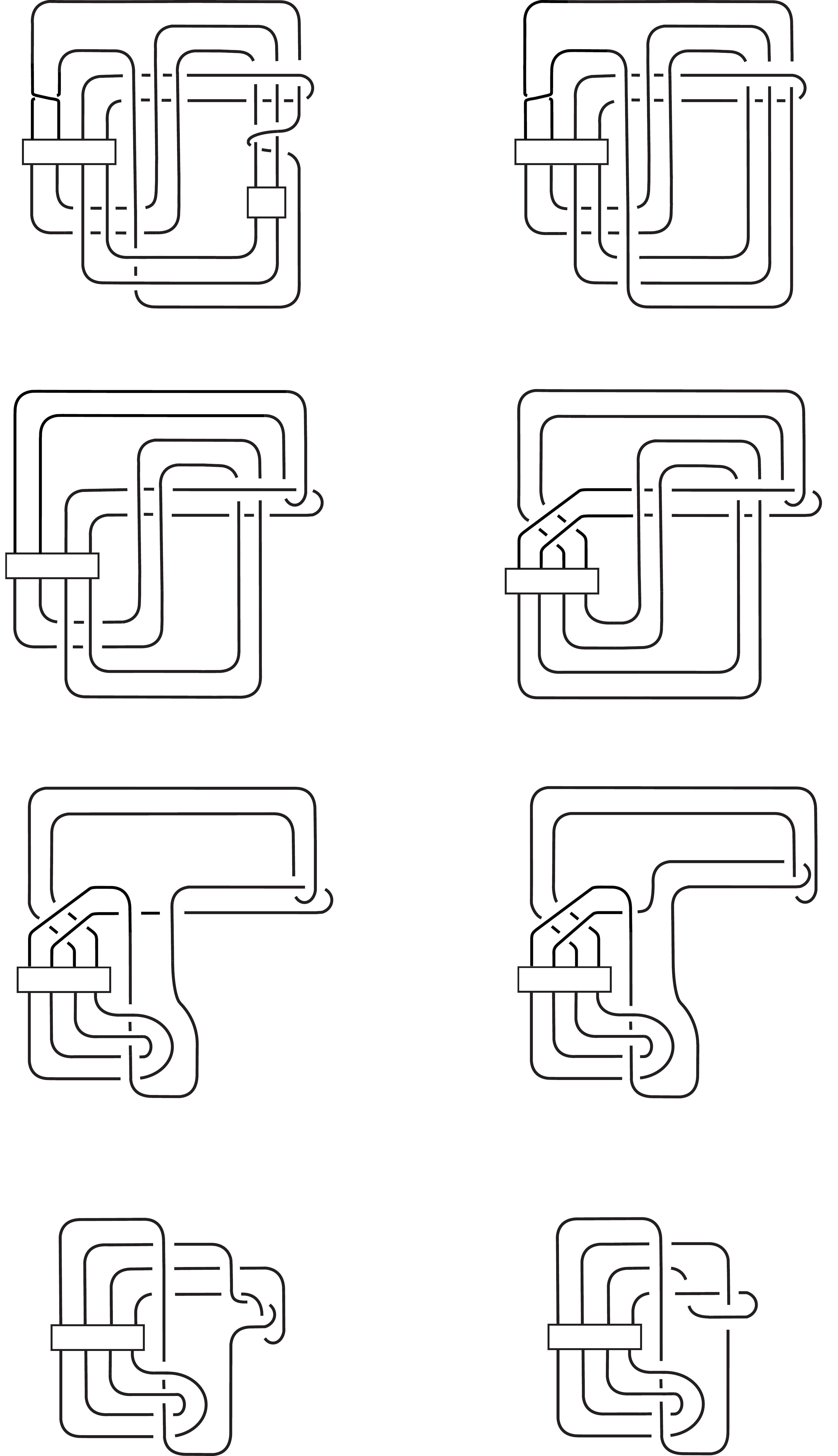}
\put(-268,447){$n$} 
\put(-200,429){{\small $2$}} 
\put(-97,447){$n$}   
\put(-275,304){$n$}   
\put(-102,299){$n$}   
\put(-270,161){$n$} 
\put(-98,161.5){$n$}
\put(-259,38){$n$} 
\put(-87,38.7){$n$} 
\put(-300,390){(a)} 
\put(-132,390){(b)} 
\put(-300,250){(c)} 
\put(-132,250){(d)} 
\put(-300,110){(e)} 
\put(-132,110){(f)} 
\put(-300,-13){(g)}
\put(-132,-13){(h)}  
\caption{The series of isotopies needed to go from $K_{n,4}^0$ to $K_n$. We leave it to the reader to check that $K_{n,4}^0$ is isotopic to the first picture (top left), also that the last picture (bottom right) is isotopic to $K_n$.}
    \label{Isotopy}
\end{figure}

\section{Proof of Theorem \ref{Thm:upperbd}} \label{Sec:upperbd}

The proof follows the general outline of Plamenevskaya's proof \cite{Olga2004} that for a Legendrian knot $\L$ in the standard contact 3-sphere with Thurston-Bennequin number $\tb(\L)$ and rotation number $\rot(\L)$, we have $\tb(\L) + \rot(\L)\leq 2\tau(\L) - 1$. The key point is that if $W: (Y_1,\xi_1)\to (Y_2,\xi_2)$ is a Stein cobordism, where $\xi_i$ are the contact structures induced by the Stein structure, then the induced homomorphism in Heegaard Floer homology $\hfhat(-Y_2)\to \hfhat(-Y_1)$ carries the contact invariant $c(\xi_2)$ to $c(\xi_1)$. More particularly, it is known that if $J$ is the given Stein structure on $W$, with associated $\spinc$ structure $\s_J$, then the map induced by $\s_J$ in $\hfhat$ has the stated property (this follows from \cite[Lemma 2.11]{Ghiggini1}, for example). If $Y_1$ is the standard 3-sphere then $c(\xi_{std})$ is nonzero, so in particular $(W,\s_J)$ induces a nontrivial homomorphism in Floer homology. Since the two homomorphism $\hfhat(Y_1)\to \hfhat(Y_2)$ and $\hfhat(-Y_2)\to \hfhat(-Y_1)$, induced by considering $W$ as a cobordism in each direction, are transposes, the map induced by $(W, \s)$ from $\hfhat(S^3)$ to $\hfhat(Y_2, \s|_{Y_2})$ is nontrivial when $\s = \s_J$ for a Stein structure $J$ on $W$.


In light of these remarks, inequality \eqref{weakbd} of Theorem \ref{Thm:upperbd} is a consequence of the following fact about homomorphisms induced by 2-handle additions. Here we always consider Floer homology groups with coefficients in the field $\F = \zee/2\zee$.

\begin{thm}\label{Thm:constraint} Let $W = W_n(K) : S^3 \to S^3_n(K)$ be the cobordism obtained by adding a 2-handle with framing $n$ along a knot $K\subset S^3$. If $\s$ is a $\spinc$ structure on $W$ such that $(W,\s)$ induces a nontrivial homomorphism $\hfhat(S^3)\to \hfhat(S^3_n(K))$, then
\[
|\langle c_1(\s), \Sigma\rangle | + n \leq 2\tau(K),
\]
where $[\Sigma]$ is a generator of $H_2(W;\zee)$.
\end{thm}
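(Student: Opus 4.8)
The plan is to unwind the cobordism map $\hat{F}_{W,\s}\colon \hfhat(S^3)\to \hfhat(S^3_n(K))$ through the large-surgery formula from knot Floer homology, where the Chern class pairing $\langle c_1(\s),\Sigma\rangle$ gets recorded as the $\spinc$/Alexander grading on the surgered manifold. First I would recall that $W_n(K)$, as a cobordism from $S^3$, has $b_2^+=0$ (for $n\le 0$) or is built from the knot in a way that the mapping cone formula of Ozsv\'ath--Szab\'o applies; in either case the maps $\hat{F}_{W,\s}$ are computed from the knot Floer complex $CFK^\infty(K)$. The key quantitative input is that $\tau(K)$ detects the top Alexander grading in which the natural map from $\hfhat(S^3)$ into the large-surgery complex (equivalently, the vertical homology of the truncated complex) is nontrivial: concretely, for $m\gg 0$ the map $\hfhat(S^3)\to \hfhat(S^3_m(K),[s])$ is surjective onto $\hfhat(S^3)$ exactly when $s\ge \tau(K)$ (up to the standard convention), and more generally nonvanishing forces a bound on $s$ in terms of $\tau$. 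So the first step is to set up this dictionary precisely, in the normalization where $[\Sigma]$ generates $H_2(W;\zee)$, so that $\langle c_1(\s),\Sigma\rangle = 2s - n$ for the appropriate identification of $\spinc$ structures on $W$ with integers $s$.

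Next I would reduce to the case $n<0$ (or $n\le 0$): if $n$ is large and positive then $2g_*(K)-2 \ge 2\tau(K)$ would already be the relevant comparison via the adjunction/$\tau\le g_*$ inequality, but more honestly one should handle all $n$ uniformly through the mapping cone. With the sign convention $\langle c_1(\s),\Sigma\rangle = 2s-n$, nontriviality of $\hat{F}_{W,\s}$ is equivalent to nontriviality of a vertical-map component in the mapping cone $\mathbb{X}(n)$ computing $\hfhat(S^3_n(K))$; this component is essentially the inclusion-induced map governed by $\hat{A}_s$ and $\hat{B}_s$, and its nonvanishing pins $s$ between $\tau(K)$-type bounds from above and below. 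Tracking the two possible signs of $2s-n$ separately and combining the resulting inequalities yields $|2s-n| + n \le 2\tau(K)$, i.e. $|\langle c_1(\s),\Sigma\rangle| + n \le 2\tau(K)$, which is \eqref{weakbd}.

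The main obstacle I expect is pinning down the exact normalization and the precise statement of ``$\tau$ controls nonvanishing of the surgery map in a given Alexander grading'' — the literature has several conventions (Ozsv\'ath--Szab\'o's $\tau$ via $\hfhat$ filtration, Rasmussen's, the mapping-cone gradings), and one must be careful that the cobordism $W_n(K)$ viewed from $S^3$ induces the map in the direction for which $\tau$ gives an \emph{upper} bound on the surviving Alexander grading rather than a lower one. I would resolve this by working with a concrete model: use the large-$N$ surgery formula $\hfhat(S^3_N(K),[s])\cong H_*(\hat{A}_s)$, identify the cobordism map from $\hfhat(S^3)$ with the map induced by the inclusion $\cfhat(S^3)\hookrightarrow \hat{A}_s$ (a quasi-isomorphism onto $C\{\max(i,j-s)=0\}$), observe this is onto on homology precisely when $s\ge\tau(K)$, and then propagate back to general $n$ via the naturality of the mapping cone under the maps relating $\mathbb{X}(n)$ and $\mathbb{X}(N)$. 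The $\epsilon(K)=1$ refinement \eqref{strongbd} would then come, as in the surrounding discussion, from Hom's result that when $\epsilon(K)=1$ the corresponding map in Alexander grading $\tau(K)$ already fails to be an isomorphism in a way that shifts the bound by $2$; I would invoke that as a separate lemma (it is the content hinted at in the ``Further Remarks'' paragraph) rather than reprove it here.
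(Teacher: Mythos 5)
Your framework is the right one and matches the paper's: invoke the Ozsv\'ath--Szab\'o mapping cone $\X_n$, use the identification $\langle c_1(\s),[\Sigma]\rangle + n = 2s$, note that the map induced by $(W_n(K),\s)$ is the inclusion of the summand $B_s=\cfhat(S^3)$ into $\X_n$, and get the absolute value from conjugation symmetry. But the heart of the proof is missing. What must actually be shown is that this inclusion is trivial on homology whenever $s\geq \tau(K)+1$, for \emph{arbitrary} $n$. The paper does this by a direct chain-level argument inside $\X_n$: for $s\geq\tau(K)+1$ the subcomplex $\eff_{s-1}\subset A_s$ contains a cycle $x$ with $v_{s*}[x]$ equal to the generator of $H_*(B_s)$, while $h_{s*}[x]=0$ since $x$ is killed by the quotient $A_s\to C\{j=s\}$; hence the generator of $H_*(B_s)$ becomes a boundary in the mapping cone. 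You instead assert that nonvanishing ``pins $s$ between $\tau(K)$-type bounds'' and propose to justify this by proving the large-$N$ case and then ``propagating back to general $n$ via naturality of the mapping cone.'' That transfer is a genuine gap: there is no naturality statement relating the cobordism maps out of $S^3$ for different framings that carries the large-surgery constraint to small or negative $n$. The obvious attempts fail: lowering the framing by attaching $+1$-framed meridional 2-handles introduces $\mathbb{CP}^2$ summands (so $b_2^+>0$, no usable blow-up formula), while raising it with $-1$-framed meridians only writes the blown-up large-surgery map as something precomposed with $\hat{F}_{W_n(K),\s}$, which yields no constraint on $\hat{F}_{W_n(K),\s}$ without injectivity of the auxiliary map. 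The effective argument has to be made inside $\X_n$ itself, using both $v_s$ and $h_s$ --- precisely the step you skipped.

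Two further points. First, your ``concrete model'' is garbled: there is no natural inclusion $\cfhat(S^3)\hookrightarrow \hat{A}_s$; the maps in play are $v_s,h_s\colon A_s\to\cfhat(S^3)$, and what $\tau$ detects is surjectivity on homology of the inclusion $\eff_s\to\cfhat(S^3)$ (for $s\geq\tau(K)$). Relatedly, the monotonicity in your stated criterion (``onto precisely when $s\geq\tau(K)$'') runs opposite to what the theorem needs for the map from $\hfhat(S^3)$ \emph{into} the surgered manifold, where nontriviality must force $s\leq\tau(K)$; you flag this orientation issue as the main obstacle but never resolve it, and it is exactly where the content lies. Second, the aside that for large positive $n$ the adjunction comparison with $2g_*(K)-2$ ``would already be the relevant comparison'' is off: $2g_*(K)-2$ can exceed $2\tau(K)$, and the hypothesis here is Floer-theoretic nonvanishing rather than the existence of a Stein structure; the large-$n$ case is handled by Proposition 3.1 of \cite{OSfourball}, which is what the paper cites before running the mapping-cone argument for general $n$.
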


\begin{proof} If $n$ is sufficiently large, this follows from Proposition 3.1 of \cite{OSfourball}. For general $n$, recall that the homomorphism induced by $(W_n(K),\s)$ can be understood in terms of the chain complex computing knot Floer homology, by a recipe described by Ozsv\'ath and Szab\'o in \cite{OSintsurg}. Following this recipe, to a knot $K\subset S^3$ we associate a certain sequence of chain complexes $\{A_s\}_{s\in\zee}$, well-defined up to chain homotopy equivalence, to which we add the collection $\{B_s\}_{s\in \zee}$ where each $B_s$ is (chain homotopy equivalent to) the complex $\widehat{CF}(S^3)$. For each $s$ there are chain maps $v_s, h_s: A_s\to \cfhat(S^3)$ that we consider as homomorphisms $v_s: A_s\to B_s$ and $h_s: A_s\to B_{s+n}$. These complexes and chain maps enjoy various properties: 
\begin{itemize}
\item For each $s$, $A_s$ is quasi-isomorphic to $A_{-s}$.
\item For $s\gg 0$, $v_s$ is a quasi-isomorphism while $v_{-s}$ is zero.
\item The induced homomorphism $v_{s*}: H_*(A_s)\to H_*(B_s)$ is nontrivial if and only if the homomorphism $h_{-s*}: H_*(A_{-s})\to H_*(B_{-s+n})$ is nontrivial.
\item For $s\ll 0$, $h_s$ is a quasi-isomorphism while $h_{-s}$ is zero.
\end{itemize}

Now assemble the $A_s$ and $B_s$ into a chain complex $\X_n$ as follows. Define a chain map $D_n: \bigoplus A_s\to \bigoplus B_s$ by declaring the $s$-th entry of the image under $D_n$ of a vector $(\ldots, a_\ell,\ldots)$ to be the element $v_s(a_s) + h_{s-n}(a_{s-n})$. Then $\X_n$ is the mapping cone of $D_n$: 
its chain group is the direct sum of all $A_s$ and $B_s$ (for $s\in \zee$), while the differential is the sum of the differentials on each $A_s$ and $B_s$ and the chain map $D_n$.

The main results of \cite{OSintsurg} include the following: 
\begin{enumerate}
\item The homology of $\X_n$ is isomorphic to the Heegaard Floer homology $\hfhat(S^3_n(K))$, where $S^3_n(K)$ denotes the result of $n$-framed surgery along $K$.
\item If $W_n(K) : S^3\to S^3_n(K)$ is the trace of the surgery, and $\s$ is a \spinc structure on $W_n(K)$, then the homomorphism $\hfhat(S^3) \to \hfhat(S^3_n(K); \s)$ induced by $\s$ corresponds under the isomorphism above to the map in homology induced by the inclusion of the subcomplex $B_s = \cfhat(S^3)$ in $\X_n$, where $s\in \zee$ is characterized by the equation
\[
\langle c_1(\s), [\Sigma]\rangle + n = 2s.
\]
\end{enumerate}
Note that while the last equation depends on an orientation of $\Sigma$, a surface representing the generator of second homology of $W_n(K)$, this technicality is unimportant since Floer homology (and homomorphisms induced by cobordism) is invariant under replacement of a \spinc structure by its conjugate. Since this operation has the effect of replacing $c_1(\s)$ by its negative, there is no harm in fixing the sign of $[\Sigma]$ arbitrarily (strictly, the construction of the $A_s$ depends on an orientation of $K$, and this choice ultimately fixes all such signs). 

Combining the facts above, we see that to prove the theorem it suffices to constrain the values of $s$ for which the inclusion $B_s\to \X_n$ induces a nonzero map in homology: in particular, it will suffice to show that if $s > \tau(K)$, then the resulting map is trivial.

To understand this argument, it will be helpful to recall some of the structure of the complexes $A_s$ and $B_s$ and the maps between them. The constructions in \cite{OSknot} show that a knot $K\subset S^3$ gives rise to a bigrading on the chain group $CF^\infty(S^3)$, meaning a $\zee\oplus\zee$-valued function on the generators, with the property that the boundary operator is non-increasing in both gradings. In this context the endomorphism $U$ of $CF^\infty$ has bidegree $(-1,-1)$, while the subcomplex $CF^-$ is the span of those generators having bidegree $(i,j)$ with $i <0$. The complex $\cfhat$ is then a sub-quotient of $CF^\infty$ and corresponds to the span of those generators with $i = 0$; thus $j$ gives rise to a filtration on $\cfhat$, and the homology of the associated graded complex corresponding to a fixed value of $j$ is the {\it knot Floer homology} $\widehat{HFK}(S^3, K, j)$.

The invariant $\tau(K)$ is defined in terms of this filtration of $\cfhat$ as follows: if $\eff_s$ denotes the subcomplex of $\cfhat$ spanned by generators with bigrading $(0,j)$ for $j\leq s$, we let
\[
\tau(K) = \min \{s \,| \, \mbox{ inclusion induces a surjection $H_*(\eff_s)\to \hfhat(S^3) = \F$}\}.
\]
Note that $H_*(\eff_s) \to \hfhat(S^3)$ is surjective for any $s\geq \tau(K)$, by factoring the inclusion of $\eff_{\tau(K)}$ through $\eff_s$.

The complexes $A_s$ are also subquotients of $CFK^\infty$ (which is the notation for $CF^\infty$ when the latter is considered with the $\zee\oplus \zee$ bigrading as above): precisely, $A_s$ is spanned by those generators of $CFK^\infty$ in bigrading $(i, j)$, where $\max(i, j-s) = 0$. Thus, picturing the bigraded summands of $CFK^\infty$ as lying at lattice points in the $(i,j)$ plane, $A_s$ corresponds to the portion of the axis $i = 0$ at or below coordinate $s$, together with the horizontal strip at vertical coordinate $s$ and with nonpositive $i$-coordinate. Following \cite{OSknot}, we write sub-quotient complexes obtained in this manner using notation such as $A_s = C\{\max(i, j-s) = 0\}$. The differential in $A_s$ is induced from that of $CFK^\infty$, so in particular $A_s$ contains a subcomplex $C\{i = 0\mbox{ and } j \leq s-1\} = \eff_{s-1}$.

The chain maps $v_s$ and $h_s$ are defined as follows. First, $v_s: A_s\to B_s = \cfhat(S^3)$ is the natural quotient $A_s\to C\{i =0 \mbox{ and } j \leq s\} = \eff_s$, followed by the inclusion $\eff_s\to \cfhat(S^3)$. For $h_s$ we recall that there is a chain homotopy equivalence $C\{j  =0\}\to C\{i = 0\} = \cfhat(S^3)$, and that the action of $U$ on $CFK^\infty$ induces a chain isomorphism $C\{j = s\} \to C\{j = 0\}$ for any $s$. Then $h_s$ is the composition of the quotient $A_s\to C\{i\leq 0\mbox{ and } j = s\}\subset C\{j = s\}$ with these two quasi-isomorphisms. 

Consider these maps in the case $s \geq \tau(K) + 1$. By definition, the subcomplex $\eff_{s-1}$ of $A_s$ contains a cycle $x$ whose image under the inclusion $\eff_{s-1}\to \cfhat(S^3)$ generates $\hfhat(S^3)$. Therefore $v_{s*}([x])$ is the generator of $H_*(B_s) = \hfhat(S^3)$. On the other hand, since $x$ clearly lies in the kernel of the quotient $A_s \to C\{j = s\}$, we have that $h_{s*}([x]) = 0$. 

Thus, for any $s\geq \tau(K)+1$, the generator $[y_s]$ of the homology of $H_*(B_s)$ is the image of $[x]$ under the map induced by $D_n$, and in particular $[y_s] = 0$ in $H_*(\X_n)$. This proves that whenever $s\geq \tau(K)+1$ the inclusion $B_s\to \X_n$ is trivial in homology, as desired.

Finally, the absolute values appearing in the statement of the theorem may be added by the conjugation invariance of maps induced by cobordisms.
\end{proof}

We remark that under certain circumstances the proof above proves a little more. Namely, observe that $A_{\tau(K)}$ maps onto $\eff_{\tau(K)}$, and the latter contains a class generating $\hfhat(S^3)$. If this class can be represented by a cycle $x\in A_{\tau(K)}$ whose image under $h_{\tau(K)*}$ is trivial, then the same argument goes through to show that the inclusion of $B_s \to \X_n$ is trivial in homology for all $s\geq \tau(K)$.

Assume, then, that $v_{\tau(K)}$ is surjective in homology. Comparing with Hom \cite[section 2.2]{homsummand}, this assumption is equivalent to the statement that $\epsilon(K)$ is either 0 or 1 (here $\epsilon(K)\in\{1,0,-1\}$ is the concordance invariant defined by Hom in \cite{Hom1}). The assumption that $h_{\tau(K)*}([x]) = 0$ is then equivalent to saying that $v_{\tau(K)}$ and $h_{\tau(K)}$ induce distinct maps in homology: the ``only if'' part is clear; for ``if'' observe that if $h_{\tau(K)*}$ is not the zero map then we can replace $[x]$ by $[x] + c$ for some class $c\in \ker(v_{\tau(K)*}) \setminus \ker(h_{\tau(K)*})$.

Now recall Lemma 4.2 of \cite{marktosun}, which asserts that a knot $K\subset S^3$ has $\epsilon(K) = 0$ if and only if $v_{\tau(K)}$ and $h_{\tau(K)}$ induce the same nonzero map in homology. We conclude that if $\epsilon(K) = 1$ the two maps are different and the desired class $[x]$ exists. Therefore:

\begin{cor}[of the proof] \label{Cor:constraint} If $K\subset S^3$ is a knot with $\epsilon(K) = 1$, then for $\s$ a \spinc structure on $W_n(K)$ inducing a nontrivial map in homology we have 
\[
|\langle c_1(\s),\Sigma\rangle | + n \leq 2\tau(K) - 2.
\]
In particular for such $K$ we have
\[
\sf(K) + \cbar(K) \leq 2\tau(K) - 2.
\]
\end{cor}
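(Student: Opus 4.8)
The plan is to extract a slightly stronger conclusion from the mapping cone argument in the proof of Theorem \ref{Thm:constraint}, exactly along the lines indicated in the two paragraphs immediately preceding the corollary statement. Recall that the proof of Theorem \ref{Thm:constraint} shows the inclusion $B_s \to \X_n$ is trivial in homology whenever $s \geq \tau(K)+1$, using a cycle $x \in \eff_{s-1} \subseteq A_s$ that represents the generator of $\hfhat(S^3)$ under $v_{s*}$ but lies in $\ker h_{s*}$. The key observation is that if one can find such a cycle already at level $s = \tau(K)$ — that is, a cycle $x \in A_{\tau(K)}$ with $v_{\tau(K)*}([x])$ a generator of $H_*(B_{\tau(K)})$ and $h_{\tau(K)*}([x]) = 0$ — then the same mapping cone computation forces the inclusion $B_s \to \X_n$ to be trivial for all $s \geq \tau(K)$. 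Via the dictionary (2) from \cite{OSintsurg} relating $s$ to $\langle c_1(\s),[\Sigma]\rangle + n = 2s$, this improves the bound to $\langle c_1(\s),[\Sigma]\rangle + n \leq 2(\tau(K)-1) = 2\tau(K)-2$, and then conjugation invariance supplies the absolute value, giving the first displayed inequality.

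So the real content is to produce the cycle $x$ at level $\tau(K)$ under the hypothesis $\epsilon(K) = 1$. First I would note that $A_{\tau(K)}$ surjects onto $\eff_{\tau(K)}$ (via $v_{\tau(K)}$ composed with the quotient to $C\{i=0, j\leq \tau(K)\}$), and $\eff_{\tau(K)}$ by definition of $\tau$ contains a cycle mapping to the generator of $\hfhat(S^3)$; hence there is at least one cycle $x_0 \in A_{\tau(K)}$ with $v_{\tau(K)*}([x_0])$ a generator. The question is whether $x_0$, or a modification of it by a class in $\ker v_{\tau(K)*}$, can be arranged to lie in $\ker h_{\tau(K)*}$. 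The hypothesis $\epsilon(K)=1$ enters here: comparing with Hom \cite[Section 2.2]{homsummand}, $\epsilon(K) \in \{0,1\}$ is equivalent to $v_{\tau(K)*}$ being surjective (hence nonzero), and Lemma 4.2 of \cite{marktosun} states that $\epsilon(K) = 0$ holds \emph{precisely} when $v_{\tau(K)}$ and $h_{\tau(K)}$ induce the same nonzero map in homology. Therefore $\epsilon(K) = 1$ means $v_{\tau(K)*}$ and $h_{\tau(K)*}$ are distinct (with $v_{\tau(K)*}$ nonzero). If $h_{\tau(K)*}([x_0]) = 0$ we are done with $x = x_0$; otherwise, since the two induced maps differ, there exists a class $c$ in the image of $H_*(A_{\tau(K)})$ with $v_{\tau(K)*}(c) = 0$ but $h_{\tau(K)*}(c) \neq 0$ — indeed, because $H_*(B_{\tau(K)}) \cong \F$, $h_{\tau(K)*}([x_0]) \neq 0$ means it is the generator, and replacing $[x_0]$ by $[x_0] + c$ kills the $h$-image while preserving the $v$-image, using that such a $c$ must exist or else $h_{\tau(K)*}$ and $v_{\tau(K)*}$ would agree on all of $H_*(A_{\tau(K)})$. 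This yields the required $x$.

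The main obstacle is the careful bookkeeping in identifying $\ker v_{\tau(K)*} \setminus \ker h_{\tau(K)*}$ and verifying that modifying $x_0$ by such a class does not disturb the property that its $v$-image generates $\hfhat(S^3)$ — but this is exactly the argument sketched in the remark preceding the corollary and is routine once the equivalences from \cite{homsummand} and \cite{marktosun} are in hand. Once Theorem \ref{Thm:constraint} is strengthened to the bound $2\tau(K)-2$ for knots with $\epsilon(K)=1$, the second displayed inequality $\sf(K) + \cbar(K) \leq 2\tau(K)-2$ follows immediately: by the reduction at the start of Section \ref{Sec:upperbd}, a Stein structure $J$ on $X_n(K)$ with $n = \sf(K)$ gives a nontrivial map $\hfhat(S^3) \to \hfhat(S^3_n(K))$ induced by $\s_J$, so the improved bound applies with $n = \sf(K)$ and $\langle c_1(\s_J),\Sigma\rangle$ achieving $\pm\cbar(K)$ for an appropriate choice of $J$, and taking the maximum over such $J$ gives the claim.
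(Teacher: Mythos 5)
Your proposal is correct and follows essentially the same route as the paper: strengthen the mapping cone argument to level $s=\tau(K)$ by producing a cycle in $A_{\tau(K)}$ with generating $v_{\tau(K)*}$-image and vanishing $h_{\tau(K)*}$-image, using the equivalence $\epsilon(K)\in\{0,1\}$ iff $v_{\tau(K)*}$ surjective together with Lemma 4.2 of \cite{marktosun}, and correcting by a class in $\ker(v_{\tau(K)*})\setminus\ker(h_{\tau(K)*})$ when necessary. One small caution: the existence of the cycle $x_0$ with generating $v$-image is not a consequence of the chain-level surjection $A_{\tau(K)}\to\eff_{\tau(K)}$ alone but exactly of the surjectivity of $v_{\tau(K)*}$ in homology, which you do invoke via $\epsilon(K)=1$, so the argument stands.
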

This, together with Theorem \ref{Thm:constraint}, proves Theorem \ref{Thm:upperbd}.

\clearpage
\bibliographystyle{amsalpha2}
\bibliography{satellite}
\end{document}